\theoremstyle{plain} 
\newtheorem{thm}{Theorem}
\newtheorem{lem}[thm]{Lemma} 
\newtheorem{prop}[thm]{Proposition}
\theoremstyle{definition}
\providecommand{\customgenericname}{}
\newcommand{\newcustomtheorem}[2]{%
  \newenvironment{#1}[1]
  {%
   \renewcommand\customgenericname{#2}%
   \renewcommand\theinnercustomgeneric{##1}%
   \innercustomgeneric
  }
  {\endinnercustomgeneric}
}
\title{Linking numbers in three-manifolds}
\author{Patricia Cahn and Alexandra Kjuchukova}
\begin{document}
\maketitle

\begin{abstract}  Let $M$ be a connected, closed, oriented three-manifold and $K$, $L$ two rationally null-homologous oriented simple closed curves in $M$. We give an explicit algorithm for computing the linking number between $K$ and $L$ in terms of a presentation of $M$ as an irregular dihedral $3$-fold cover of $S^3$ branched along a knot $\alpha\subset S^3$. Since every closed, oriented three-manifold admits such a presentation, our results apply to all (well-defined) linking numbers in all three-manifolds. Furthermore, ribbon obstructions for a knot $\alpha$ can be derived from dihedral covers of $\alpha$. The linking numbers we compute are necessary for evaluating one such obstruction. This work is a step toward testing potential counter-examples to the Slice-Ribbon Conjecture, among other applications.

\end{abstract}

\section{Introduction}

The notion of linking number between knots in $S^3$ dates back at least as far as Gauss~\cite{gauss1867allgemeine}. More generally, given a closed, oriented three-manifold $M$ and two rationally null-homologous, oriented, simple closed curves  $K, L\subset M$, the linking number $lk(K, L)$ is defined as well. It is given by $\dfrac{1}{n}\left(K\cdot 
C_L \right)$, where $C_L$ is a 2-chain in $M$ with boundary $nL$, $n\in \mathbb{N}$, and $\cdot$ denotes the signed intersection number. This linking number is well-defined and symmetric~\cite{birman1980seifert}.

Let the three-manifold $M$ be presented as a 3-fold irregular dihedral branched cover of $S^3$, branched along a knot.  Every closed oriented three-manifold admits such a presentation~\cite{hilden1974every, hirsch1974offene, montesinos1978}. 
Consider a branched cover $f:M\to S^3$ of this type, and let $\gamma, \delta \subset S^3$ be oriented, closed curves embedded disjointly from each other and from the branching set $\alpha$ of $f$.  In Theorem~\ref{main-thm}, we give a formula for the linking number in $M$ between any two connected components of the pre-images of $\gamma$ and $\delta$, in the case where the pre-images of $\gamma$ and $\delta$ have three connected compents each.  The general case is given in Section~\ref{fewer-lifts}.  This linking number is computed in terms of a diagram of the link $\alpha\cup\gamma\cup\delta$. The geometric construction underlying the computation is reviewed in Section~\ref{algoverview.subsec} and serves to complement the theorem statement, which is combinatorial in flavor. Linking numbers in dihedral branched covers of $S^3$ are needed for calculating several knot and three-manifold invariants~\cite{litherland1980formula, CS1975invariants, kjuchukova2018dihedral, geske2018signatures}; applications are considered in Section~\ref{applications.subsec}.

Briefly, our technique is the following. The cone on the link $\alpha \cup \gamma \cup \delta$  gives a cell structure on $S^3$ which lifts, via the map $f$, to a cell structure on $M$.   Two-chains bounding closed connected components of $f^{-1}(\gamma)$ and $f^{-1}(\delta)$ are found by solving a system of linear equations. We obtain these equations by examining the diagram of $\alpha \cup \gamma \cup \delta$ used to construct the cell structure on $M$. Finally, intersection numbers between lifts of $\delta$ and the 2-chains bounding lifts of $\gamma$ are computed from local data about the relevant 1- and 2-cells.

Classically, a knot invariant is derived from linking numbers in branched covers as follows.  
Let $\alpha\subset S^3$ be a Fox $3$-colorable knot. Any 3-coloring of $\alpha$ determines an irregular dihedral $3$-fold covering map $f: M\to S^3$ with branching set $\alpha$, as reviewed in Section~\ref{dihedral.subsec}. Given such a three-fold cover $f$, the preimage of the branching set, $f^{-1}(\alpha)$, has two connected components whose linking number, in $M$, is either a rational number or undefined. The set of these linking numbers over all distinct 3-colorings of $\alpha$ is called the {\it linking number invariant} of $\alpha$. Analogous invariants can be derived for more general Fox $p$-colorings and other types of branched covers.

Dihedral linking numbers have been instrumental in distinguishing and tabulating knots, including in various situations where other invariants do not suffice. The linking number invariant was introduced by Reidemeister in~\cite{reidemeister1929knoten}, where he applied it to tell apart two knots with the same Alexander polynomial. In~\cite{riley1971homomorphisms}, Riley generalized this idea and used linking numbers in 5-fold (non-dihedral) branched covers to distinguish a pair of mutants whose Alexander polynomials were trivial. Two 36-crossing knots with the same Jones polynomial were distinguished by Birman using linking numbers in four-fold simple branched covers~\cite{birman1985jones}. 

Linking numbers in dihedral branched covers are also good for studying certain properties of knots:  they provide an obstruction to amphichirality~\cite{fox1970metacyclic, perko1974classification} and invertibility~\cite{hartley1983identifying}. But the most well-known story is perhaps that of the Perko Pair, which consists of ``two" knots which dihedral linking numbers failed to distinguish. These knots turned out to be isotopic, and constituted an accidental duplicate in Conway's knot table~\cite{conway1970enumeration}. The mistake was corrected by Perko.  His discovery also provided a counterexample to a conjecture of Tait -- stating that two reduced alternating diagrams of a given knot have equal the writhe -- previously believed to be established as a theorem.

Historically, efforts at knot classification have relied heavily on linking numbers in branched covers. Bankwitz and Schumann~\cite{bankwitz1934viergeflechte}  classified knots of up to nine crossings using linking numbers in dihedral covers of 2-bridge knots as their primary tool. (Note that the irregular dihedral branched cover of a 2-bridge knot is always $S^3$; a proof of this old observation is recalled in \cite{kjuchukova2016classification}.) Perko extended these methods, which allowed him to complete the classification to knots of ten and eleven crossings~\cite{perko1974classification}. {Burde proved that dihedral linking numbers can tell apart all 2-bridge knots~\cite{burde1988links}, without regard to crossing number.} The largest-scale computation of linking numbers was done by Dowker and Thistlethwaite, who succeeded in tabulating millions of knots~\cite{dowker1982classification}.   
It is difficult to imagine that today's knot tables would be as advanced in the absence of Reidemeister's extremely powerful idea to consider linking numbers between the branch curves in non-cyclic branched covers of knots. For a more detailed account of the role of linking numbers in knot theory, as well as several illuminating examples, see~\cite{perko2016historical}.
  
Our results extend the classical linking number computation to include linking numbers of curves other than the branch curves, namely, closed connected components of $f^{-1}(\gamma)$ and $f^{-1}(\delta)$, where $\gamma, \delta\subset S^3$ are curves in the complement of the branching set.  It is helpful to formally regard points on $\gamma$ and $\delta$ as points on the branching set of $f$, with the property that each of their pre-images has branching index 1. Accordingly, we refer to $\gamma$ and $\delta$ as a {\it pseudo-branch curves} of $f$. We will call each closed connected component of $f^{-1}(\gamma)$ (resp. $f^{-1}(\delta)$) a {\it lift} of $\gamma$ (resp. $\delta$).  Finally, despite the apparent ambiguity, we will also use the phrase ``pseudo-branch curves" to refer to the lifts themselves. Since every closed, connected, oriented 3-manifold admits a presentation as a 3-fold dihedral cover of $S^3$ branched along a knot, our methods compute all well-defined linking numbers in all 3-manifolds; this is proved at the end of Section~\ref{notation.sec}.

\subsection{Algorithm overview and the main theorem}\label{notation.sec}
\label{algoverview.subsec} We now summarize the geometric setup underlying our computation, and state our main theorem. Let $\alpha\subset S^3$ be a 3-colored knot and $f: M\to S^3$ be the corresponding dihedral cover of $S^3$ branched along $\alpha$.  Let $\gamma$, $\delta\subset S^3 - \alpha$ be two disjoint, oriented knots. We treat the homomorphism $\rho: \pi_1(S^3-\alpha)\twoheadrightarrow D_3$ from which the branched cover $f$ arises as a homomorphism of $\pi_1(S^3-\alpha-\gamma-\delta)$ in which meridians of $\gamma$ and $\delta$ all map to the trivial element; accordingly, we refer to $\gamma$, $\delta$ as {\it pseudo-branch curves}.  We compute linking numbers between connected components of $f^{-1}(\gamma)$ and  $f^{-1}(\delta)$ by the following procedure.  

\begin{enumerate}
	\item Choose a cell structure on $S^3$ determined by the cone on the link $\alpha \cup \gamma\cup \delta$; see Figure~\ref{coneonlink.fig}.
	\item Lift this cell structure to $M$ by examining the lifts of the cells near each crossing of the link diagram downstairs; see, for example, Figure \ref{upstairsinhomog.fig}.  This cell structure contains the lifts of the pseudo-branch curves as 1-subcomplexes.
	\item Solve a linear system to determine which of the lifts of the pseudo-branch curves are rationally null-homologous.  For each rationally null-homologous lift of a pseudo-branch curve, we find an explicit 2-chain which it bounds. 
	\item For each pair of rationally null-homologous lifts of the pseudo-branch curves, we compute linking numbers by adding up the signed intersection numbers of the relevant 1- and 2-cells. 
\end{enumerate}

	  \begin{figure}[htbp]\includegraphics[width=3in]{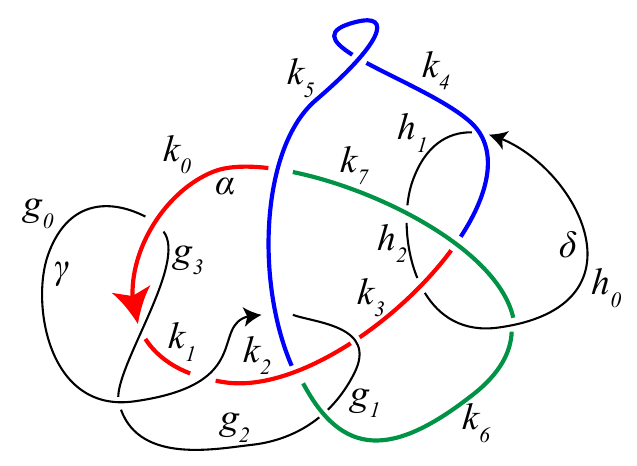}
	  	\caption{The labelled arcs of the link diagram $\alpha\cup\gamma\cup\delta$.}
	  \end{figure}
	  
	  \begin{figure}[htbp]\includegraphics[width=4.5in]{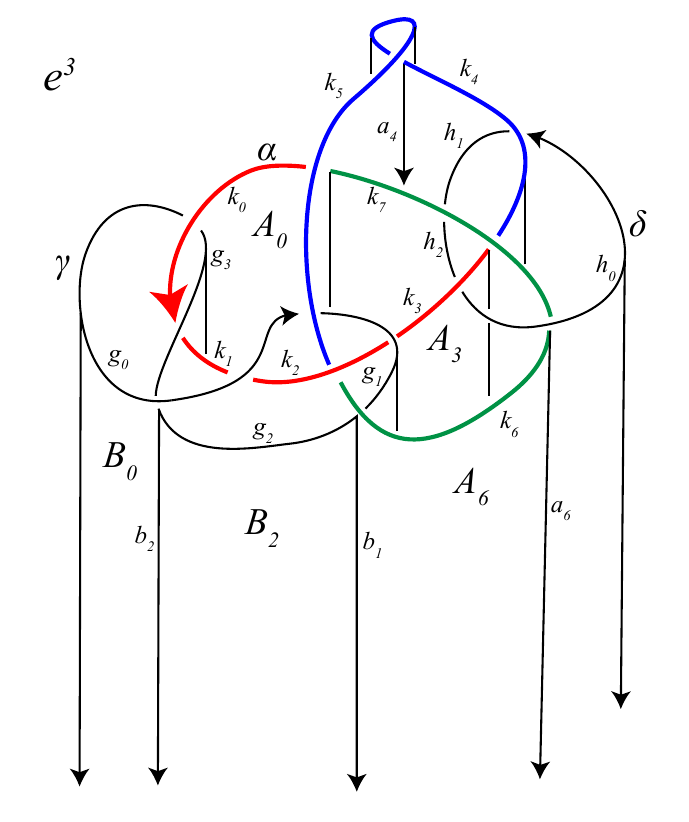}
	  	\caption{A cell structure on $S^3$ determined by the cone on the link $\alpha\cup\gamma\cup\delta$ together with the notation for the individual cells.  }
	  	\label{coneonlink.fig}
	  	\end{figure}

 Steps (1) and (2) are discussed in Section~\ref{alg}.  Step (3) is carried out in Proposition~\ref{system-eqns2}, which determines when a lift of a pseudo-branch curve bounds a 2-chain, and finds the 2-chain when it exists.  Step (4) is the content of Theorem~\ref{main-thm}, which gives a formula for the linking number between lifts of pseudo-branch curves.
	
 We now state our main results, Proposition~\ref{system-eqns2} and Theorem~\ref{main-thm}. We assume for the moment that each of the pseudo-branch curves has three (closed, connected) lifts, and denote these by  $\gamma_j$ and $\delta_k$, $j, k\in \{1, 2, 3\}$. Both $\gamma_j$ and $\delta_k$ must be rationally null-homologous for their linking number to be well-defined.  We verify this condition by reversing the roles of $\gamma$ and $\delta$ in our computations and thus making sure that each of the curves bounds a 2-chain. The lift $\gamma_j$ is rationally null-homologous if and only if a solution $(x_0^j, x_1^j,\dots,x_{m-1}^j)\in\mathbb{Q}^m$ to the system of equations in Proposition~\ref{system-eqns2} exists.  The $x^j_i$ describe a rational 2-chain with boundary $\gamma_j$,  namely they are coefficients for the 2-cells $A_{2,i}$ and $-A_{3,i}$ in the chain (these 2-cells are defined in Section~\ref{liftcells.sec}).  Additional notation is given in Table~\ref{notation.tab}.  The precise definitions of items 10 to 14 in the table are technical and given in the equations listed, which can be found in Sections~\ref{M-cells} and~\ref{linking-k}.

 \begin{prop} 
 \label{system-eqns2}
 \label{thm2}
 Let $s$ denote the number of crossings of $\gamma$ under $\alpha$ plus the number of self-crossings of $\gamma$, let $m$ denote the number of crossings of $\alpha$ under $\gamma$ plus the number $n$ of self-crossings of $\alpha$.  Let $f(i)$ denote the index of the overstrand $k_{f(i)}$ at crossing $i$, and let the signs $\epsilon$, and $\epsilon_k$ for $k=1,2,3,4$ be as in Table~\ref{notation.tab}.  If the following inhomogeneous system of linear equations 

 $$\left\{\begin{array}{ll}
 x_i^j-x_{i+1}^j+\epsilon_1(i)\epsilon_2(i)x_{f(i)}^j=0 &\text{ if crossing i of } \alpha \text{ is inhomogeneous}\\
 x_i^j-x_{i+1}^j+2\epsilon_3(i)x_{f(i)}^j=0&\text{ if crossing i of }\alpha \text{ is homogeneous}\\
 x^j_i-x^j_{i+1}=\epsilon(i)\epsilon^j_4(i)&\text{if strand i of }\alpha \text{ passes under }\gamma
 \end{array}
 \right.$$

 has a solution $(x_0^j,x_1^j,\dots,x_{m-1}^j)$ over $\mathbb{Q}$, then the lift $\gamma_j$ of $\gamma$ is rationally null-homologous and is bounded by the 2-chain
 $$C_j=\sum_{i=0}^{s-1} B_{j,i}+\sum_{i=0}^{m-1} x_i^j(A_{2,i}-A_{3,i}).$$  

 \end{prop}

  Let $I_{j,k}$ be the linking number of  $\gamma_j$ and $\delta_k$.  Theorem~\ref{main-thm} gives a formula for $I_{j,k}$ in terms of the solution to the system of equations in Proposition~\ref{system-eqns2}.
  
\begin{thm}
\label{main-thm}
Let $f: M\to S^3$ be a three-fold irregular dihedral cover branched along a knot $\alpha$, and let $\gamma, \delta\subset S^3-\alpha$.  If the lifts $\gamma_j$ and $\delta_k$ are rationally null-homologous closed loops in $M$ for $j,k\in\{1,2,3\}$, then the linking number $I_{j,k}$ of  $\gamma_j$ with $\delta_k$ is the sum:

$$I_{j,k}=\sum_{i=0}^{t-1} c_i,$$
where $c_i$ is given by

$$c_i =\left\{\begin{array}{ll}
\epsilon_5^k(i) x^j_{f(i)}&\text{if }h_i \text{ terminates at an arc } k_{f(i)} \text{ of }\alpha;\\
\epsilon_\delta(i)\epsilon_6^{j,k}(i)&\text{if }h_i \text{ terminates at an arc of }\gamma;\\
0&\text{if }h_i\text{ terminates at an arc of }\delta.\\
\end{array}
\right.$$
      \end{thm}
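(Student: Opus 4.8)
The plan is to compute $I_{j,k}$ from the definition of the linking number as an intersection number with a bounding $2$-chain. Since $\gamma^j$ is rationally nullhomologous, there is a rational $2$-chain $\Sigma^j$ in $M$ with $\partial\Sigma^j=\gamma^j$, and then $I_{j,k}=\delta^k\cdot\Sigma^j$, the algebraic intersection number of the loop $\delta^k$ with $\Sigma^j$. This number is independent of the chosen $\Sigma^j$ precisely because $\delta^k$ is itself rationally nullhomologous (the symmetric check described in the footnote guarantees this), so the whole quantity is well defined. The entire argument is therefore organized around making both the chain $\Sigma^j$ and the intersection pairing explicit and combinatorial in terms of the colored diagram of $\alpha\cup\gamma\cup\delta$.

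First I would fix the lifted cell structure on $M$ obtained by pulling back through $f$ a cell decomposition of $S^3$ adapted to the diagram: the $1$-cells upstairs are the lifts of the arcs of the diagram (in particular the arcs $h_i$ of $\delta$), and the $2$-cells upstairs are the lifts of the membranes hung beneath the arcs of $\alpha$ and of $\gamma$. Writing $\Sigma^j=\sum_i x^j_i\,\sigma_i$ as a combination of these lifted $2$-cells, the condition $\partial\Sigma^j=\gamma^j$ becomes an inhomogeneous linear system in the unknowns $x^j_i$, whose coefficient matrix and right-hand side are read directly off the crossings and the dihedral coloring; solvability of the system is exactly the statement that $\gamma^j$ is rationally nullhomologous. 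The key structural point is that $\Sigma^j$ is supported only on $2$-cells coming from $\alpha$ and from $\gamma$, and never on a $2$-cell associated to $\delta$.

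Next I would expand $I_{j,k}=\delta^k\cdot\Sigma^j=\sum_i x^j_i\,(\delta^k\cdot\sigma_i)$ and regroup the signed intersection points according to the arcs $h_i$ of $\delta$. Because the arcs are cut off precisely at the undercrossings of $\delta$, there is exactly one potential intersection of $\delta^k$ with $\Sigma^j$ per arc, occurring at the crossing at the head of $h_i$, and its value $C_i$ depends only on the type of the over-strand there. If $h_i$ runs under an arc of $\alpha$, the lift of $\delta$ on the sheet selected by $k$ pierces the lifted membrane of the over-arc $f(i)$, contributing $\epsilon(i)\,\epsilon_5^k(i)\,x^j_{f(i)}$. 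If $h_i$ runs under $\gamma$, the pierced membrane lies along $\gamma=\partial\Sigma^j$, so its contribution is fixed directly by the coloring and crossing data as $\epsilon(i)\,\epsilon_6^{j,k}(i)$, the dependence on both $j$ and $k$ reflecting that both strands are being lifted. If the head of $h_i$ is a self-crossing of $\delta$, the over-strand is $\delta$ itself, but $\Sigma^j$ carries no $2$-cell along $\delta$, so the contribution is $0$. Summing these three cases over all arcs yields the stated formula.

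The main obstacle is the sign and sheet bookkeeping encoded in $\epsilon_5$, $\epsilon_6$, and $\epsilon_7$: for each crossing I must determine which lift of the over-strand's membrane the arc $\delta^k$ actually meets, and with what orientation, as a function of the dihedral coloring and of the indices $j$ and $k$. Getting these purely combinatorial sign functions to agree with the geometric intersection signs of the lifts, and confirming that the resulting count is insensitive to the particular $\Sigma^j$ solving the system, is the delicate part; the remainder is the routine translation of the intersection-number definition of linking into the diagrammatic setting.
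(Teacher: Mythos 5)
Your proposal is correct and takes essentially the same route as the paper's proof: you use the $2$-chain produced by the linear system of Proposition~\ref{thm2} (supported only on lifted cells below $\alpha$ and $\gamma$, never below $\delta$), expand $I_{j,k}$ as a sum of per-arc intersection contributions at the head of each $h_i$, and split into the same three cases, with $\epsilon_5^k$ and $\epsilon_6^{j,k}$ recording which lifted $2$-cell the arc of $\delta^k$ pierces --- exactly the paper's argument. The only (cosmetic) difference is that you also spell out why the count is well defined, a point the paper relegates to a footnote.
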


\begin{table}[htbp]
\begin{center}
	\begin{tabular}{|l|l|l|}
		\hline
		1.&$k_0,k_1,\dots,k_{m-1} $&Arcs of $\alpha$ in diagram $\alpha \cup \gamma$  \\
		\hline
		2.&$g_0,g_1,\dots,g_{s-1}$&Arcs of $\gamma$ in diagram $\alpha \cup \gamma $ \\
		\hline
		3.&$h_0, h_1, \dots, h_{t-1}$&Arcs of $\delta$ in diagram $\alpha \cup \gamma \cup \delta$  \\
		\hline
		4.&$\epsilon(i)\in\{-1,1\}$&Local writhe number at the head of arc $k_i$  \\
		\hline
		5.&$\epsilon_\gamma(i)\in\{-1,1\}$&Local writhe number at the head of arc $g_i$  \\
		\hline
		6.&$\epsilon_\delta(i)\in\{-1,1\}$&Local writhe number at the head of arc $h_i$  \\
		\hline
		7.&$f(i)$& Subscript of overcrossing arc at head of arc $k_i$  \\
		\hline
		8.&$f_\gamma (i)$& Subscript of overcrossing arc at head of arc $g_i$  \\
		\hline
		9.&$f_\delta (i)$& Subscript of overcrossing arc at head of arc $h_i$  \\
		\hline
		10.&$\epsilon_1(i)\in\{-1,1\}$& Concerns 2-cells above inhomogeneous crossing of $\alpha$. See Equation~\ref{epsilon1}\\ 
		\hline
		11.&$\epsilon_2(i)\in\{-1,1\}$& Concerns 2-cells above inhomogeneous crossing of $\alpha$. See Equation~\ref{epsilon2}\\ 
		\hline
		12.&$\epsilon_3(i)\in\{-1,1\}$& Concerns 2-cells above a homogeneous crossing of $\alpha$. See Equation~\ref{epsilon3}\\
		\hline
		13.&$\epsilon_4^j(i)\in\{-1,0,1\}$& Concerns 2-cells above a crossing of $\alpha$ under $\gamma$. See Equation~\ref{epsilon4}\\
		\hline
		13.&$\epsilon_5^k(i)\in\{-1,0,1\}$& Concerns 2-cells above a crossing of $\delta$ under $\alpha$. See Equation~\ref{epsilon5}\\
		\hline
		14.&$\epsilon_6^{j,k}(i)\in\{0,1\}$& Concerns 2-cells above a crossing of $\delta$ under $\gamma$. See Equation~\ref{epsilon6}\\
		\hline
	\end{tabular}
	\vskip .1in
	\caption{Notation}\label{notation.tab}
\end{center}
\end{table}

We have focused here on the case where each pseudo-branch curve lifts to three closed loops because this case is the one we encounter exclusively in our main application~\cite{cahnkjuchukova2018computing}. 
In general, the number of connected components of $f^{-1}(\gamma)$ is determined by the image of $[\gamma]\in\pi_1(S^3-\alpha)$ under the homomorphism $\pi_1(S^3-\alpha)\to D_3$ which determines the branched cover $f$. Therefore, the number of components of $f^{-1}(\gamma)$ can be determined from the the link diagram $\alpha\cup\gamma$ where $\alpha$ is 3-colored.
Computations involving pseudo-branch curves whose pre-images under the branched covering map consist of fewer than three connected components can be carried out using the same techniques; see Section~\ref{fewer-lifts}.  
Theorem~\ref{main-thm} can also be used to compute linking numbers between the branch curves themselves, as well as linking numbers between branch and pseudo-branch curves, as discussed in Sections \ref{brlinking.sec} and \ref{brpblinking.sec}.

Our methods compute all well-defined linking numbers in all closed, connected, oriented 3-manifolds.

\begin{lem}
	\label{alllinkingnumbers.lem} Let $M$ be a closed, connected, oriented 3-manifold, and let $K\cup L$ be a 2-component oriented link in $M$. Denote by $f: M\rightarrow S^3$ a 3-fold irregular dihedral cover whose branching set is the knot $\alpha\subset S^3$. 
	{Then $K\cup L$ is isotopic to a link $K'\cup L'$ such that $f(K'\cup L')$ is a link disjoint from $\alpha$.} 
\end{lem}
This lemma follows from a standard general position argument. See, for example,~\cite{mulazzani1998representing}, in which the authors give a diagrammatic theory for links in 3-manifolds represented as 3-fold covers of $S^3$.  In particular, their labelled Reidemeister moves provide an alternative approach to computing linking numbers between   lifts of pseudo-branch curves. 

Given rationally null-homologous $K$, $L$ as in the above lemma, note that $lk(K, L)=lk(K', L')$, since the two links $(K, L)$ and $(K', L')$ are isotopic.  Now let $\gamma=f(K')$ and $\delta=f(L')$.  That is, $K'$ and $L'$ are closed connected components of $f^{-1}(\gamma)$ and $f^{-1}(\delta)$, respectively. In the language of this paper, $f^{-1}(\gamma)$ and $f^{-1}(\delta)$ are lifts of the pseudo-branch curves $\gamma$ and $\delta$.  If $\gamma$ and $\delta$ each have three lifts,  the linking number of $K'$ and $L'$ can be computed by the formula given in Theorem~\ref{main-thm}, yielding the linking number of $K$ and $L$.  Otherwise the linking number can be computed as in Section~\ref{fewer-lifts}.

\subsection{Applications to branched covers of 4-manifolds and the Slice-Ribbon Conjecture.}\label{applications.subsec}   In~\cite{CS1975invariants}, Cappell and Shaneson gave a formula, in terms of linking numbers of lifts of pseudo-branch curves,  for the Rokhlin $\mu$ invariant of a dihedral cover of knot $\alpha$. As noted earlier, every oriented three-manifold is a dihedral cover of a knot~\cite{hilden1974every, hirsch1974offene, montesinos1978}; hence, this method is universal. 
Secondly, Litherland~\cite{litherland1980formula} showed that Casson-Gordon invariants of a knot can also be computed using linking numbers of pseudo-branch curves in a branched cover. 
The algorithm provided herein allows for the execution of a key missing step in evaluating Casson-Gordon and Rokhlin $\mu$ invariants via the above methods.

 The application we focus on is the computation of a ribbon obstruction $\Xi_p$ arising in the study of singular dihedral branched covers of four-manifolds.  In~\cite{kjuchukova2016classification}, the second author gives a formula for the signature of a $p$-fold irregular dihedral branched cover $f: Y\to X$ between closed oriented topological four-manifolds $X$ and $Y$, in the case where the branching set $B$ of $f$ is a closed oriented surface embedded in the base $X$ with a cone singularity described by a knot $\alpha\subset S^3$. This formula shows that the signature of $Y$ deviates from the locally flat case by a defect term, $\Xi_p(\alpha)$, which is determined by the singularity $\alpha$. The term  $\Xi_p(\alpha)$ can be calculated in part via linking numbers of pseudo-branch curves in a dihedral cover of $\alpha$. If the base $X$ of the covering map $f$ is in fact $S^4$, the signature of the cover $Y$ is exactly equal to $\Xi_p(\alpha)$. In particular, our method for computing linking numbers between pseudo-branch curves allows us to determine the signature of a dihedral branched cover  of $S^4$ in terms of combinatorial data about the singularity on the branching set. We give an example of such a computation, using the algorithm given in this paper, in~\cite{cahnkjuchukova2018computing}. Furthermore, for a slice knot $\alpha$, the integer $\Xi_p(\alpha)$ can be used to derive an obstruction to $\alpha$ being homotopy ribbon~\cite{cahnkjuchukova2017singbranchedcovers, geske2018signatures}. Precisely, for a fixed $p$, $\Xi_p(\alpha)$ is constrained in a fixed range for all homotopy ribbon knots. This obstruction provides a new method to test counter-examples to the Slice-Ribbon conjecture. We are interested in applying the results of this paper to search for a slice knot which is not ribbon; we use our algorithm to compute $\Xi_3$ for concrete examples of slice knots in \cite{cahnkjuchukova2018computing}.  {In~\cite{cahnkju2018genus} we give an infinite family of knots whose four-genus is computed with the help of the $\Xi_3$ invariant.} An effective method for evaluating linking numbers in three-manifolds is essential for using the $\Xi_p(\alpha)$ invariant to study knot four-genus and knot concordance.   

\subsection{Overview of the article.} In Section~\ref{alg}, we recall the definition of an irregular dihedral cover, and we discuss the relevant cell structure on $S^3$, as well its lift to the cover $M$. In Section~\ref{chains.sec} we construct the rational 2-chains bounding the pseudo-branch curves, proving Proposition~\ref{system-eqns2}.  In Section~\ref{epsilons}, we prove Theorem~\ref{main-thm}, which gives the formula for the linking numbers between lifts of pseudo-branch curves, as well as Theorem~\ref{pb-b}, which gives an analogous formula for the linking numbers between  lifts of a pseudo-branch curve and a branch curve.  Section~\ref{ex}  illustrates our algorithm on a concrete example of a three-fold dihedral cover and several pseudo-branch curves therein. Due to the large number of cells used, computations by hand quickly evolve into an unwieldy task, even for the most resolute and concentrated persons. Our algorithm for calculating linking numbers in branched covers has therefore been implemented in Python.  The code is included in the Appendix\footnote{Appendix not included in the published version.}.

{\bf Acknowledgements.}  Parts of this work were completed at the Max Planck Institute for Mathematics.  We thank MPIM for its support and hospitality.  We are grateful to Julius Shaneson for contributing ideas to this paper. Thanks also to Ken Perko for his feedback on the first version of our manuscript. This work was partially supported by the Simons Foundation/SFARI (Grant Number 523862, P. Cahn) and by NSF grants DMS 1821212 and DMS 1821257 to the authors.

\section{A combinatorial method for computing linking numbers}
\label{alg}

\subsection{Irregular dihedral covers}\label{dihedral.subsec} Let $\alpha$ be a knot in $S^3$ and $f: M\to S^3$ a covering map branched along $\alpha$. The branched cover $f$ is determined by its unbranched counterpart, $f_{|f^{-1}(S^3-\alpha)}$. Thus, we can associate to it a group homomorphism  $\rho:\pi_1(S^3-\alpha)\rightarrow G$ for some group $G$. For us, $G$ is always $D_{p}$, the dihedral group of order $2p$, $\rho$ is surjective, and $p$ is odd. The homomorphism $\rho$ induces the regular $2p$-fold dihedral cover of $(S^3, \alpha)$; this cover corresponds to the subgroup $\ker \rho\subset\pi_1(S^3-\alpha)$. The irregular $p$-fold dihedral cover of $(S^3, \alpha)$, also induced by $\rho$, corresponds to a subgroup $\rho^{-1}(\mathbb{Z}_2)\subset\pi_1(S^3-\alpha)$, where $\mathbb{Z}_2$ can be any subgroup of $D_{p}$ of order 2. The irregular dihedral cover is a $\mathbb{Z}_2$ quotient of the regular one, and different choices of subgroup $\mathbb{Z}_2\subset D_{p}$ correspond to different choices of an involution.  Recall also that $\rho$ can be represented by a $p$-coloring of the knot diagram, where the ``color" of each arc indicates the element in $D_p$ of order 2 to which $\rho$ maps the element of the knot group corresponding to the meridian of this arc.  In this paper we focus on 3-fold irregular dihedral covers.  The colors 1, 2, and 3 correspond to the transpositions $(23)$, $(13)$, and $(12)$ respectively.  Given a 3-fold dihedral cover, the pre-image of the knot $\alpha$ has two connected components $\alpha_1$ and $\alpha_2$, with branching indices 1 and 2 respectively.

\subsection{The cell structure on $S^3$} 
\label{M-cells}

This section serves primarily to describe the cell structure on $S^3$ determined by the cone on the link $\alpha\cup\gamma\cup \delta$, and some related notation. This is a subdivision of the cell structure used by Perko~\cite{perko1964thesis} to compute the linking number of the branch curves $\alpha_1$ and $\alpha_2$. 

We begin by focusing on the part of the cell structure determined only by the cone on $\alpha\cup \gamma$, as the lifts of these 2-cells are sufficient to construct a 2-chain bounding each lift of $\gamma$. The relevant notation is summarized in Table~\ref{notation.tab}.

The arcs of $\alpha$ in the link diagram of $\alpha \cup \gamma$ are labelled $k_0$, $k_1$, $\dots$, $k_{m-1}$, proceeding along the diagram in the direction of the orientation of $\alpha$;  $m$ is the sum of the number of crossings of $\alpha$ with itself and the number of crossings of $\alpha$ with $\gamma$ where $\alpha$ passes under $\gamma$.  For the purposes of labeling the lifts of 2-cells in a systematic way, we require that the diagram of $\alpha$ have an even number of crossings. We can arrange this to be the case by performing a Type 1 Reidemeister move on $\alpha$, if necessary. From now on, we assume without further comment that the diagram of $\alpha$ has this property. Similarly, the arcs of $\gamma$ are labelled $g_0$, $g_1$, $\dots$, $g_{s-1}$, where $s$ is the number of crossings of $\gamma$ with itself plus the number of crossings of $\alpha$ with $\gamma$ where $\gamma$ passes under $\alpha$. We refer to the crossing at the head of arc $k_i$ as the $i^{th}$ crossing of $\alpha$, and the crossing at the head of the arc $g_i$ as the $i^{th}$ crossing of $\gamma$; in each case the over-arc could be an arc of $\alpha$ or $\gamma$. After the arcs $k_i$ and $g_i$ have been labelled, we introduce the third link component $\delta$ to the diagram, and label its arcs $h_0$, $h_1$, $\dots$, $h_{t-1}$.  If several consecutive arcs of $\delta$ are separated by over-arcs of $\delta$, we treat these arcs as a single long arc with one label $h_i$, so $t$ above is the number of crossings of $\delta$ under $\alpha$ plus the number of crossings of $\delta$ under $\gamma$ (this allows us to slightly simplify the input to the computer program).  

{We denote by $\epsilon(i)$, $\epsilon_\gamma(i)$, or $\epsilon_\delta(i)$ the local writhe number at the head of $k_i$, $g_i$, or $h_i$ respectively.}

The cell structure on $S^3$, pictured in Figure~\ref{coneonlink.fig}, consists of:
\begin{enumerate}
	\item One $0$-cell, which is the cone point of the cone on the link $\alpha\cup\gamma\cup\delta$.
	\item One ``horizontal''$1$-cell for each arc in the link diagram: these are the $k_i$, $g_i$, and $h_i$.
	\item One ``vertical'' $1$-cell for each crossing in the link diagram.  The vertical 1-cell connecting the head of an arc of $k_i$ or $g_i$ to the $0$-cell is denoted $a_i$ or $b_i$, respectively.
	\item One ``vertical'' $2$-cell for each crossing in the link diagram.  The vertical 2-cell below an arc $k_i$ or  $g_i$ is denoted $A_i$ or $B_i$, respectively.
	\item One 3-cell, $e^3$, which is the complement of the cone on the link.
\end{enumerate}

Note that $\partial A_i=k_i+a_i-a_{i-1}$, and $\partial B_i=g_i+b_i-b_{i-1}$.

Denote by $c(i)$ the color, 1, 2, or 3, assigned to the arc $k_i$.  Let $f(i)$ denote the subscript $j$ of the arc $k_j$ or $g_j$ which passes over crossing $i$ of $\alpha$, and let $f_\gamma(i)$ denote the subscript $j$ of the arc ($k_j$ or $g_j$) passing over crossing $i$ of $\gamma$; $f_\delta(i)$ is defined similarly.   For example, in Figure~\ref{coneonlink.fig}, $f(3)=7$, $f_\gamma(0)=5$, $f_\gamma(1)=6$, and $f_\delta(0)=4$. We will sometimes write $f(i)$ rather than $f_\gamma(i)$ or $f_\delta(i)$ to simplify notation, when it is clear that the under-arc is an arc of $\gamma$ or $\delta$ rather than one of $\alpha$. 

The lists of over-strand subscripts $(f(0),\dots, f(m-1))$ and $(f_\gamma(0),\dots ,f_\gamma(s-1))$ for $\alpha$ and $\gamma$, the list of colors $(c(0),\dots, c(m-1))$ of the arcs of $\alpha$, and two lists containing the signs of crossings (local writhe numbers) for $\alpha$ and $\gamma$, serve as the necessary input to the algorithm.  At this point, the reader may also wish to glance at the Appendix for examples of this input. Examples are worked out in detail in Section ~\ref{ex} (see also Figures \ref{intersectingcurve2.fig} and \ref{intersectingcurve.fig}).  In the figures, the arcs $k_0$ of $\alpha$ and $g_0$ of $\gamma$ are marked with a zero (as is the zeroth arc of $\delta$).  In order to avoid clutter in the figures, we have labelled the only the arcs $k_0,\dots,k_{13}$ of $\alpha$.  We write $i$ instead of $k_i$, and refer to this as a {\it numbering} of the diagram.   The arcs of $\gamma$ should be numbered in a similar fashion.  Note that we ignore the second pseudo-branch curve $\delta$ when numbering the arcs of $\alpha$ and $\gamma$ in the diagram.

\subsection{The cell structure on $M$} \label{liftcells.sec} Now we describe how to lift our cell structure to $M$ and introduce notation for the lifts of the cells.  Our strategy is to understand the lift of the cell structure on $S^3$ in a neighborhood of each crossing, and label the cells near the lift of each crossing in a systematic way.  For example, Figure~\ref{cellsbelowcrossing.fig} shows the cells near a self-crossing of $\alpha$ in $S^3$.  Figure~\ref{upstairsinhomog.fig} shows one way these cells lift if the crossing is {\it inhomogenous}, that is, the colors on the three arcs are all different.  In contrast, Figure~\ref{upstairshomog.fig} shows one way these cells lift if the crossing is {\it homogeneous}, that is, the three colors on the arcs are the same.  Later in this section we explain how these figures are constructed, what the possible configurations of cells above a crossing are, and how to determine which configuration arises.  We must also analyze the lifts of cells near self-crossings of $\gamma$, and near crossings of $\alpha$ under $\gamma$.  We adopt some of the notation of~\cite{perko1964thesis} for the lifts of cells coming from the knot $\alpha$.  We introduce a new way of visualizing the cell structure which simplifies the task of computing linking numbers between pseudo-branch curves, and generalizes easily to the case where $\alpha$ is Fox $p$-colored for $p\geq 5.$

 Let $\alpha_1$ and $\alpha_2$ denote the index-1 and index-2 branch curves in $M$ of the 3-fold irregular branched covering map $f:M\rightarrow S^3$; note $\alpha_1\cup \alpha_2=f^{-1}(\alpha)$. Each arc $k_i$ of $\alpha$ has two pre-images under the covering map.  Let $k_{1,i}$ denote the index-1 lift of $k_i$ and let $k_{2,i}$ denote the index-2 lift of $k_i$.  Let $A_{1,i}$, $A_{2,i}$ and $A_{3,i}$ denote the three lifts of $A_i$; shortly, we will explain which of these 2-cells is given which label.

First, we introduce notation for the lifts of $e^3.$  This 3-cell has three lifts, $e^3_1$, $e^3_2$, and $e^3_3$.  Recall that the color $c(i)$ on the arc $k_i$ of $\alpha$ corresponds to a transposition in $S^3$, which we denote by $\tau_i$.  We label the cells $e^3_j$ such that the lift of a meridian of $k_i$ beginning in the cell $e^3_j$ has its endpoint in $e^3_{\tau_i(j)}$. Figure~\ref{meridianlift.fig} shows how these cells are configured along the lifts of an arc of $\alpha$, away from any crossings in the link diagram.
\begin{figure}[htbp]\includegraphics[width=6in]{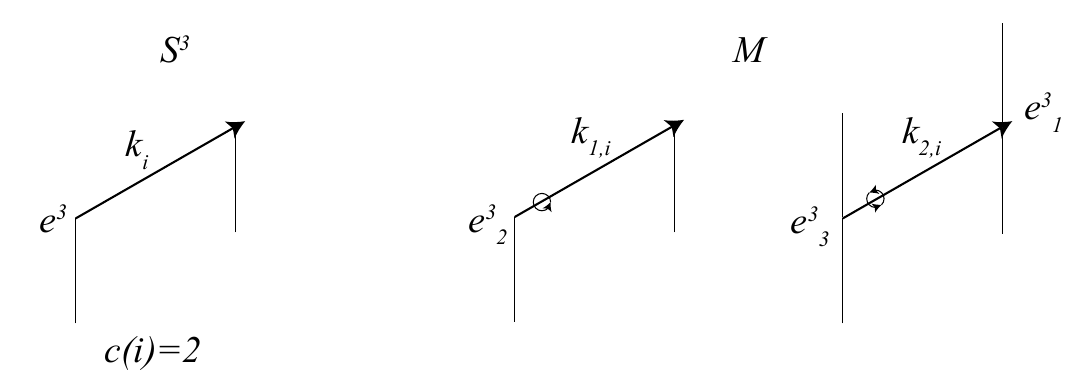}
	\caption{Configuration of the cells $e^3_j$ when the arc $k_i$ is colored 2.}
	\label{meridianlift.fig}
\end{figure}

Now we describe Perko's notation for the lifts of the $A_i$ and the $a_i$, which we also adopt.  For each $i$, one lift of $A_i$ has boundary meeting the index 1 branch curve.  Call this lift $A_{1,i}$.  The other two lifts of $A_i$ share a common boundary segment along the index 2 curve.  These lifts will be called $A_{2,i}$ and $A_{3,i}$.  One makes the choice as follows.  Let $\vec{A}$ be a framing of $\alpha$ tangent to the vertical 2-cells $A_i$.  Now lift $\vec{A}$ to a framing $\vec{A}_{2}$ along the index 2 lift $\alpha_2$ of $\alpha$.  Such a lift exists because the number of crossings in the diagram of $\alpha$ is even.   There are two choices for such a lift.  We make a choice arbitrarily along $k_{2,0}$ and this uniquely determines the lift along the entire curve.  Call $A_{2,i}$ the lift of $A_i$ located in the positive direction of $\vec{A}_2$.  Last, we denote by $a_{j,i}$ the lift of $a_i$ which is a subset of the boundary of $A_{j,i}$ for $j=1,2,3$.  See Figure~\ref{determineA2i.fig}.

\begin{figure}[htbp]
\includegraphics[width=6in]{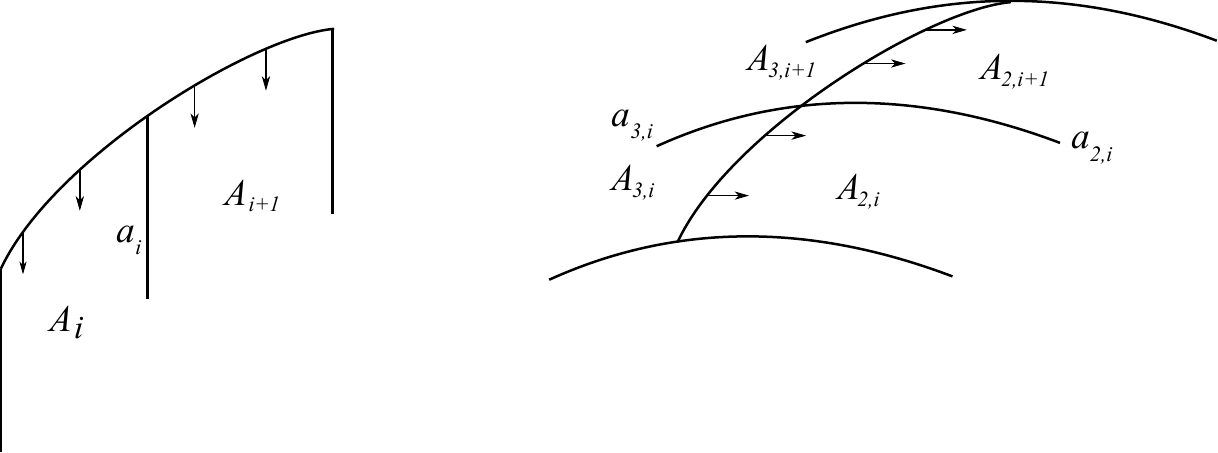}
\caption{A lift of a framing of $\alpha$ along the degree two curve. This lift determines the labelling of the lifts of the 2-cells $A_i$.}
\label{determineA2i.fig}
\end{figure}

\begin{figure}[htbp]
\includegraphics[width=2.5in]{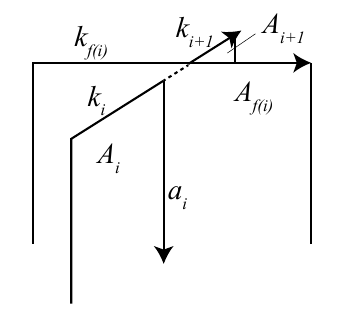}
\caption{Cells at crossing $i$ of $\alpha$.}
\label{cellsbelowcrossing.fig}
\end{figure}
 The next step is to determine how the 2-cells in $M$ are attached to the 1-skeleton; this is essential for finding the required 2-chains for the linking number computation.  There are two cases to consider:  self-crossings of $\alpha$ (either inhomogeneous or homogeneous);  and crossings involving $\gamma$ (self-crossings of $\gamma$, and crossings of $\alpha$ under $\gamma$).

{\bf Case 1: Self-crossings of $ \alpha.$} The cells at a self-crossing of $\alpha$ are shown in Figure~\ref{cellsbelowcrossing.fig}.  We analyze how the lifts of $A_i$, $A_{i+1}$, and $A_{f(i)}$ are assembled. Namely, we need to understand possible configurations of $A_{1,i}$, $A_{2,i}$, $A_{3,i}$, $A_{1,i+1}$, $A_{2,i+1}$, $A_{3,i+1}$, $A_{1,f(i)}$, $A_{2,f(i)}$, and $A_{3,f(i)}$.

{\bf Case 1a: Inhomogeneous self-crossings of $\alpha$.} Figure~\ref{tents.fig} shows one way these cells might lift at an inhomogenous crossing, if $k_i$ is colored `2', $k_{i+1}$ is colored `1', and $k_{f(i)}$ is colored `3'.  Note that in Figure~\ref{tents.fig}, some cells appear twice in the picture---for example, $k_{2,i}$, $A_{2,i}$, and $A_{3,i}$.  We can alternatively visualize these cells as shown in Figure~\ref{upstairsinhomog.fig}; we construct this picture by identifying all duplicate cells in Figure~\ref{tents.fig}.  The positions of $A_{1,i}$ and $A_{1,f(i)}$, relative to the positions of the 3-cells $e^3_j$, are completely determined by this coloring information.  The positions of $A_{2,i}$ and $A_{3,i}$, on the other hand, are determined by global information about the coloring of the knot, rather than just the coloring at that crossing. One possibility is shown in Figure~\ref{tents.fig}, but the position of the 2-cells $A_{2,i}$ and $A_{3,i}$ could be interchanged.  This is also the case for $A_{2,f(i)}$ and $A_{3,f(i)}$. 
\begin{figure}[htbp]
\includegraphics[width=\textwidth]{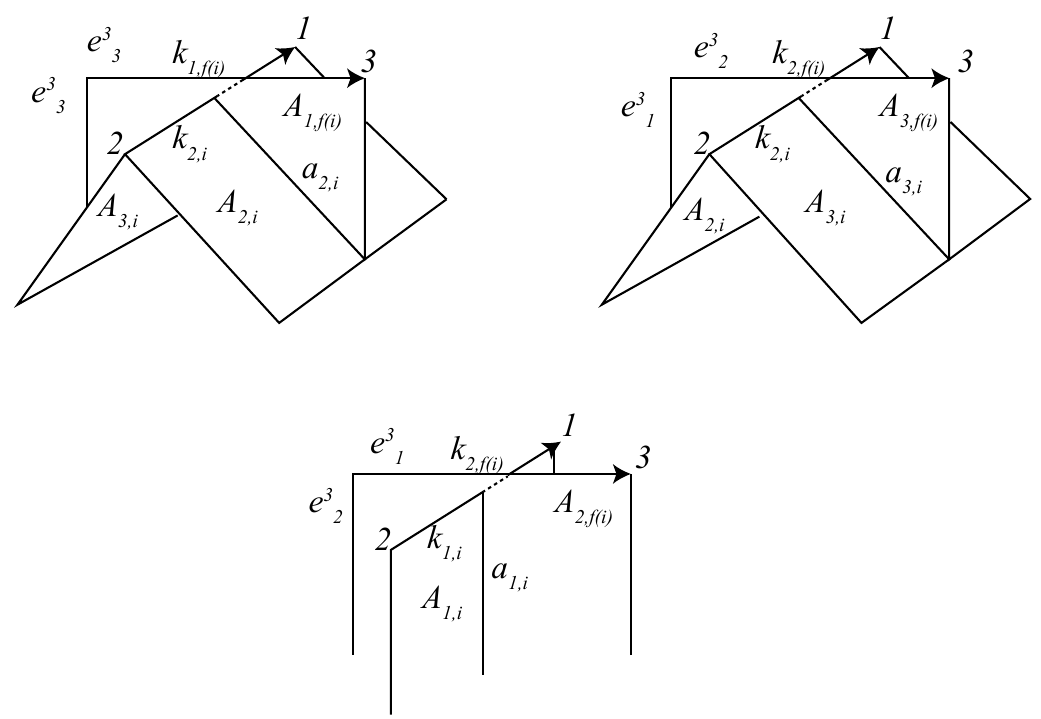}
\caption{One possible configuration of cells above an inhomogeneous crossing~$i$ of $\alpha$.}
\label{tents.fig}
\end{figure}

\begin{figure}[htbp]
\includegraphics[width=5in]{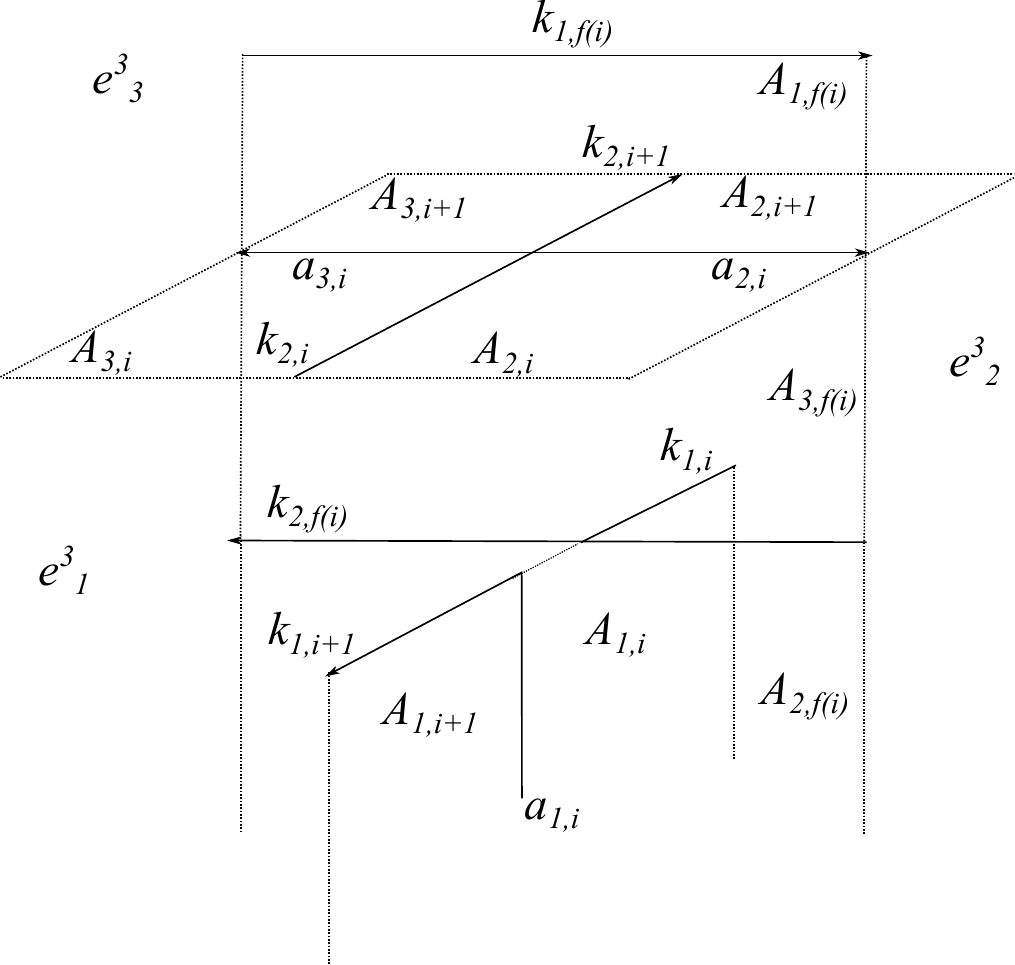}
\caption{One possible configuration of the cells above an inhomogeneous crossing $i$ of $\alpha$. Here, $k_i$ is colored $2$, $k_{i+1}$ is colored $1$, and $k_{f(i)}$ is colored $3$. This picture is obtained by identifying duplicate cells in Figure~\ref{tents.fig}.}
\label{upstairsinhomog.fig}
\end{figure}

Therefore, we need to keep track of the position of $A_{2,i}$ and $A_{3,i}$ relative to the various 3-cells $e^3_j$. To do this, we introduce a function $w(i)$ as follows.  Informally, $w(i)=j$, where $j$ is the subscript of the 3-cell $e^3_j$ such that, if one stands in that 3-cell on the index 2 branch curve $k_{2,i}$ and facing in the direction of its orientation, then $A_{2,i}$ is on the right.  In Figures~\ref{tents.fig} and~\ref{upstairsinhomog.fig}, $w(i)=3$ and $w(f(i))=2$.  

One can easily compute $w(i)$ from $c(i)$ and $f(i)$ as follows:

$$w(i+1)=\left\{\begin{array}{rr}w(i)&\text{if crossing }i\text{ of }\alpha \text{ terminates at an arc of }\gamma \\
\tau_{f(i)}(w(i))&\text{if crossing }i\text{ of }\alpha\text{ terminates at an arc of }\alpha

\end{array}\right.$$
Recall that $\tau_{f(i)}$ denotes the transpostion corresponding to the color $c(f(i))$ on the over-arc at crossing $i$; $\tau_{f(i)}(w(i))$ denotes its action on $w(i)\in \{1,2,3\}.$

There are eight possible configurations of 2-cells above a given inhomogeneous crossing of $\alpha$ with prescribed colors, shown in Figure~\ref{inhomogeneousall.fig}.  In the case of an inhomogeneous crossing, $w(i)$ equals either $c(f(i))$ or $c(i+1)$, and $w(f(i))$ equals either $c(i)$ or $c(i+1)$. We record this information with a pair of functions $\epsilon_1(i)$ and $\epsilon_2(i)$:
\begin{equation}\label{epsilon1}
	\epsilon_1(i)=1 \text{ if } c(i)\neq w(f(i)), \text{ and }\epsilon_1(i)=-1 \text{ if }c(i)=w(f(i))
\end{equation}	

\begin{equation}	\label{epsilon2}
\epsilon_2(i)=1 \text{ if } c(f(i))= w(i), \text{ and }\epsilon_2(i)=-1 \text{ if }c(f(i))\neq w(i).
\end{equation}

 In addition, the crossing may have positive or negative local writhe number $\epsilon(i)$.

	\begin{figure}[htbp]\includegraphics[width=6in]{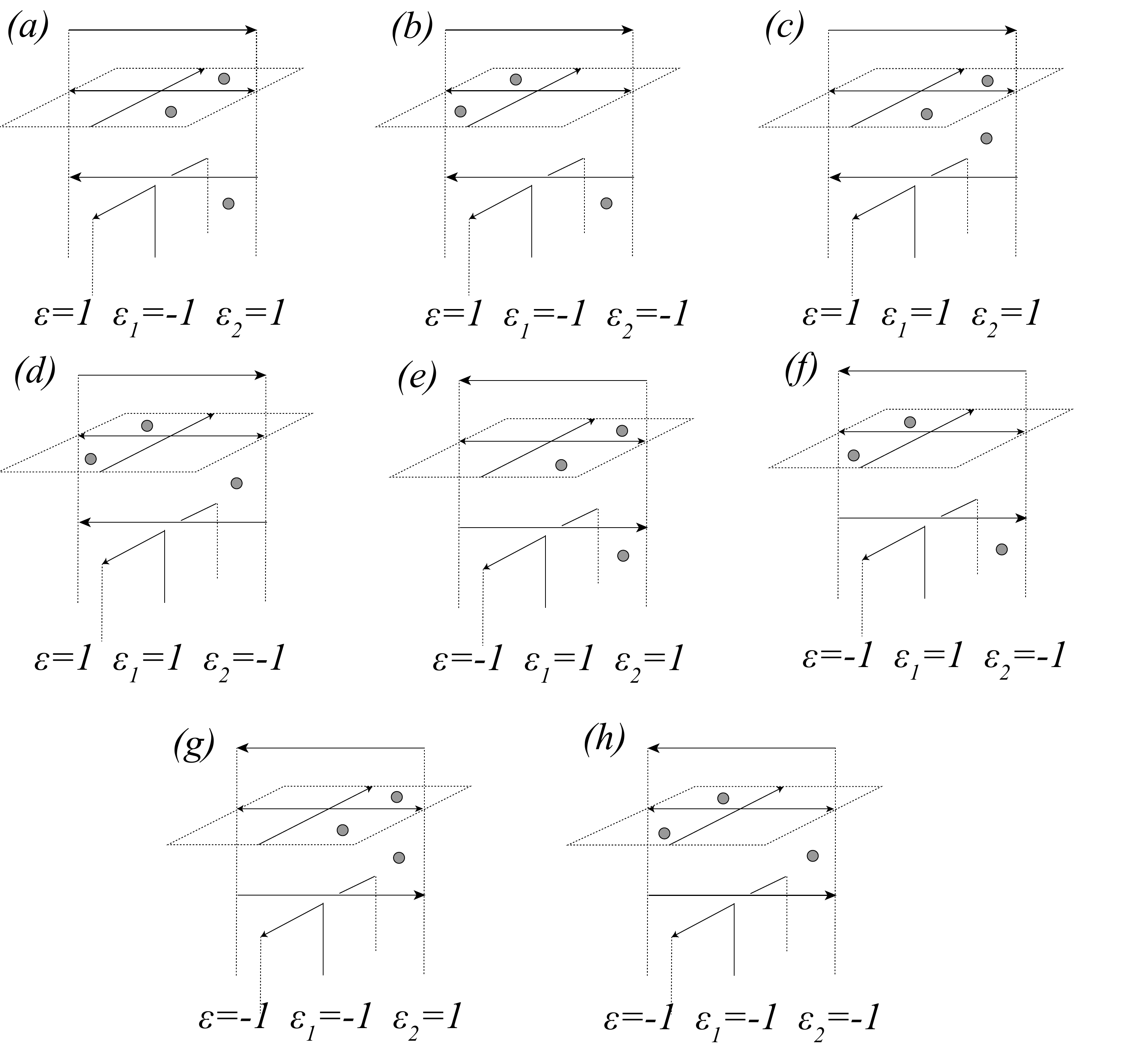}
		\caption{Configurations of cells above an inhomogeneous self-crossing of $\alpha$.  Dotted 2-cells indicate the locations of the cells $A_{2,i}$, $A_{2,i+1}$, and $A_{2,f(i)}$.}\label{inhomogeneousall.fig}
	\end{figure}

{\bf Case 1b: Homogeneous self-crossings of $\alpha$.} In the case of a homogenous crossing of $\alpha$, the colors $c(i), c(i+1)$ and $c(f(i))$ are all equal, and the 3-cell $e^3_{c(i)}$ is adjacent to the arcs $k_{1,i}$, $k_{1,i+1}$, and $k_{1,f(i)}$. See, for example, Figure~\ref{upstairshomog.fig}. There are four possible configurations of 2-cells near the index 2 lift of $\alpha$, shown in Figure~\ref{homogeneousall.fig}; in particular, the value of $w(i)$ either coincides with $w(f(i))$, or not.  We record this information with a function $\epsilon_3(i)$:
\begin{equation} \label{epsilon3}
	\epsilon_3(i)=1 \text{ if } w(i)\neq w(f(i)) \text { and }\epsilon_3(i)=-1 \text{ if } w(i)=w(f(i)).
\end{equation}

\begin{figure}[htbp]
\includegraphics[width=4.5in]{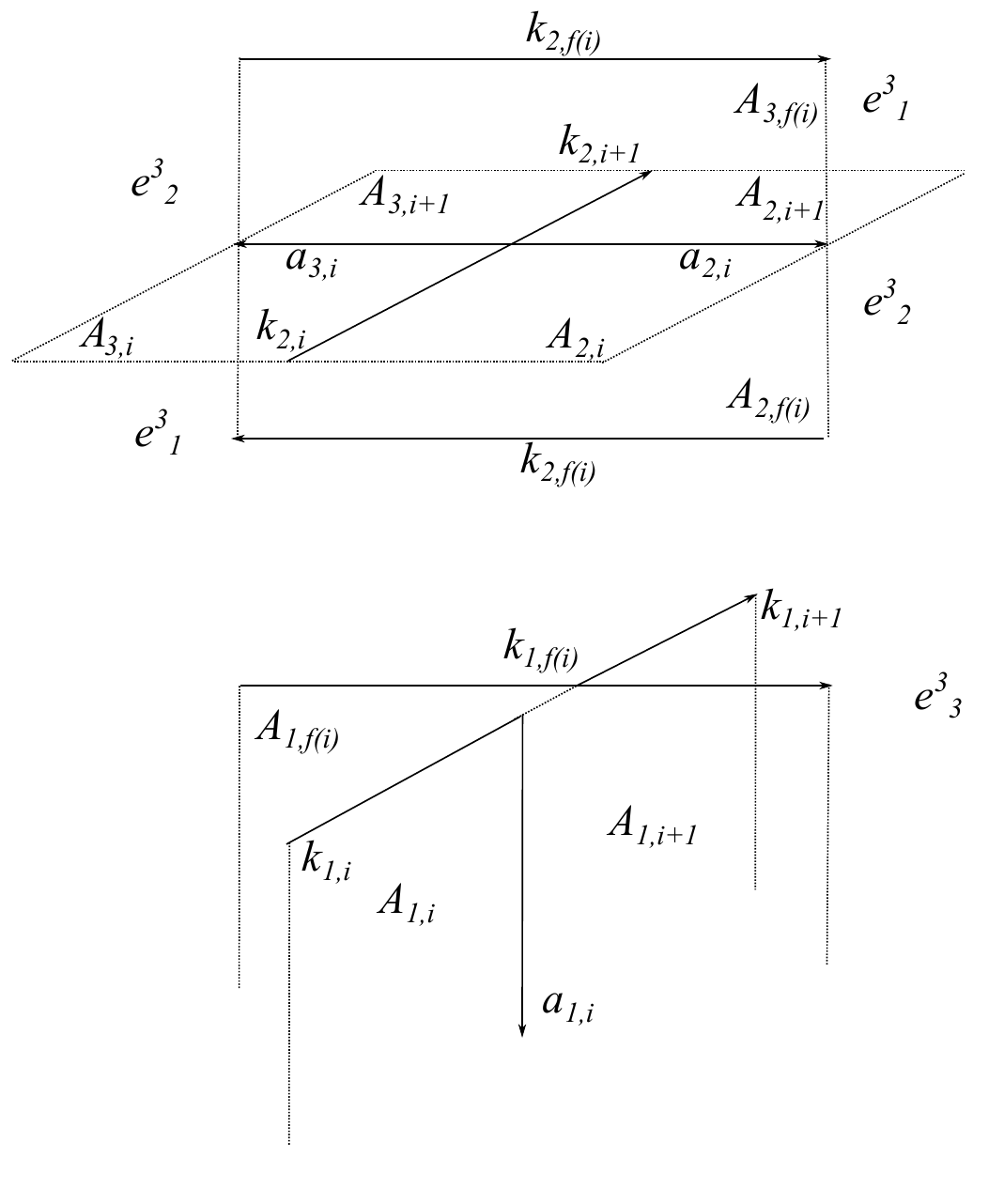}
\caption{One possible configuration of the cells near the lift of a homogeneous positive crossing $i$, with all arcs colored $3$.  The two copies of $k_{2,f(i)}$ are identified. }
\label{upstairshomog.fig}
\end{figure}

	\begin{figure}[htbp]\includegraphics[width=4.5in]{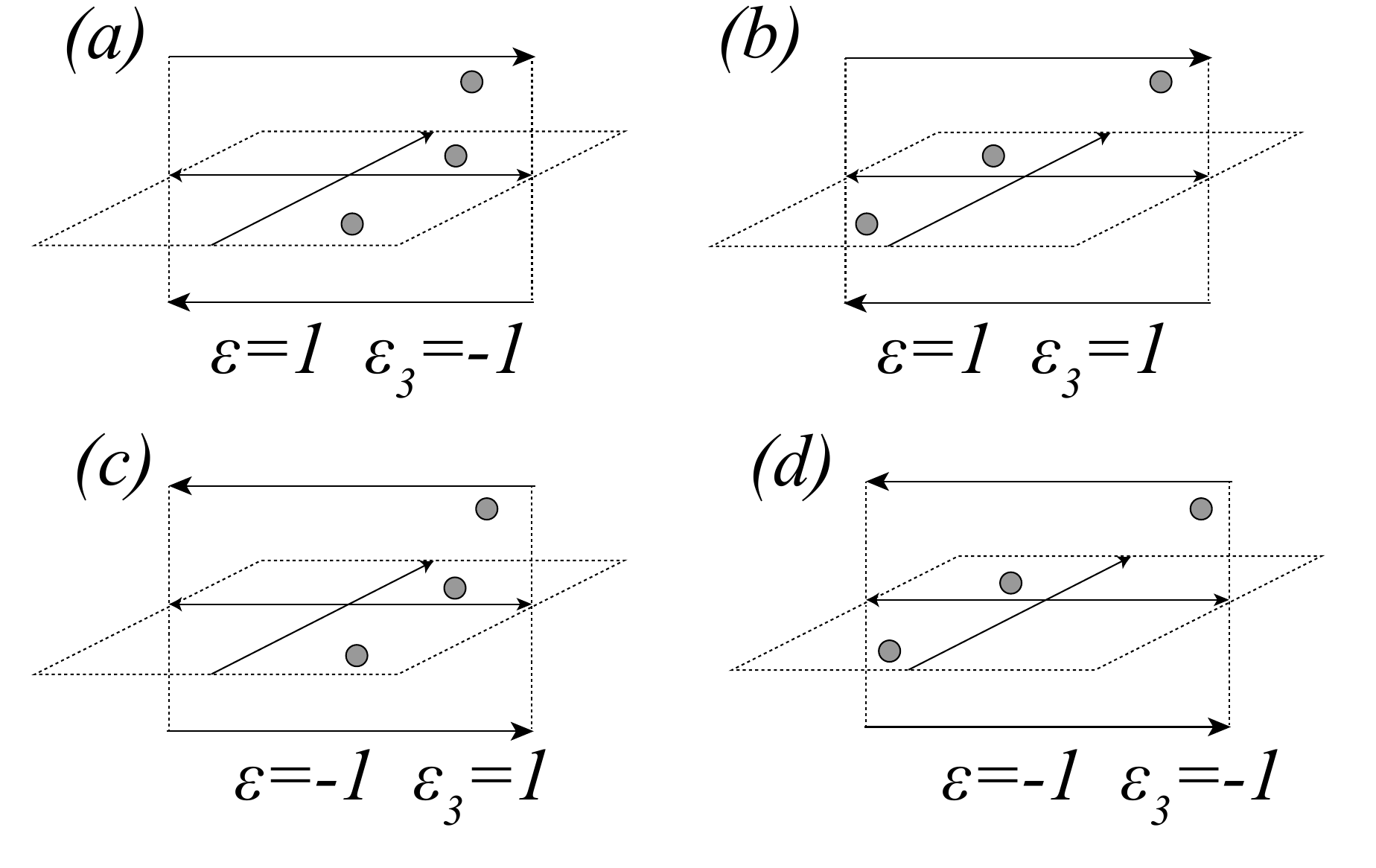}
		\caption{Configurations of cells above a homogeneous self-crossing of $\alpha$.  Dotted 2-cells indicate the locations of the cells $A_{2,i}$, $A_{2,i+1}$, and $A_{2,f(i)}$.}\label{homogeneousall.fig}
	\end{figure}

{\bf Case 2: Crossings involving $\gamma$.} We have now discussed the lifts of all cells in the cone on $\alpha$. At this stage, we introduce notation for the cells in the cone on $\gamma$, which have not played a role so far.

Choose a basepoint $x_0$ on the arc $g_0$ of $\gamma$.  The curve $\gamma$ has three path-lifts under the covering map, $\gamma_1$, $\gamma_2$, and $\gamma_3$, beginning at each of the three preimages of $x_0$.  Assume the $\gamma_i$ are labelled so that the lift of $g_0$ which lies in the 3-cell $e^3_i$ is contained in $\gamma_i$. The pre-image $f^{-1}(\gamma)$ is the union of the lifts $\gamma_1$, $\gamma_2$, and $\gamma_3$, and has one, two or three connected components in $M$.   Let $g_{j,i}$, $j=1,2,3$, denote the lift of $g_i$ which lies in the lift $\gamma_j$ of $\gamma$.   Denote by $B_{j,i}$ the lift of $B_i$ whose boundary contains $g_{j,i}$. 

First we consider self-crossings of $\gamma$. In this case, covering map is locally trivial in a neighborhood of the crossing.  As before, different configurations of 2-cells arise above a self-crossing of $\gamma$; see Figure \ref{upstairspseudooverpseudo.fig} for one example.  We introduce an auxiliary function $l^g_j(i)$, whose value is the subscript $s$ of the 3-cell $e^3_s$ which contains the lift $g_{j,i}$ of the arc $g_i$.  For example, in Figure \ref{upstairspseudooverpseudo.fig}, $l^g_1(i)=2$, $l^g_2(i)=3$, and $l^g_3(i)=1$.

\begin{figure}[htbp]
\includegraphics[width=6in]{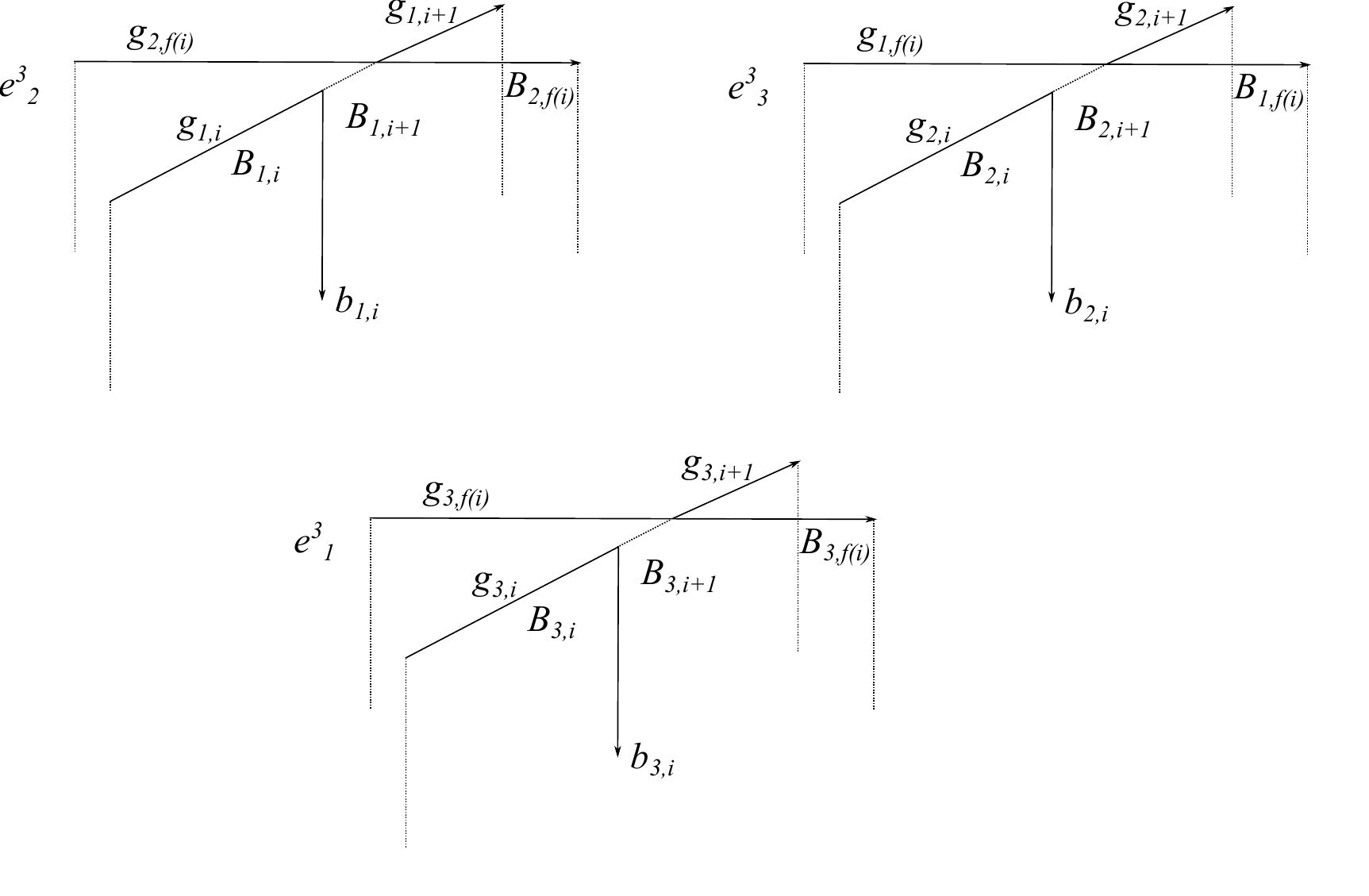}
\caption{One possible configuration of cells lying near the lift of a crossing of $\gamma$ with itself.}
\label{upstairspseudooverpseudo.fig}
\end{figure}

 Next we consider crossings where $\alpha$ passes under the pseudo-branch curve $\gamma$. As in the case of crossings of $\alpha$ with itself, the configuration of cells above that crossing will depend on the value of $w(i)$.   One such configuration is pictured in Figure \ref{upstairsknotunderpseudo.fig}.  All six configurations are shown in Figure~\ref{alphaundergammaall.fig}.  To capture the combinatorics at play, we associate a function to crossings of $\alpha$ under $\gamma$ as follows: 

\begin{equation}\label{epsilon4}
	\epsilon_4^j(i)= \left\{\begin{array}{ll}
 {1}\text{ if } l^g_j({f(i)})=w(i); \\
0 \text{ if }l^g_j({f(i)})=c(i);\\
{-1} \text{ otherwise.}
 \end{array}
\right.
\end{equation}

For example, in Figure \ref{upstairsknotunderpseudo.fig}, $\epsilon^1_4(i)={1}$, $\epsilon^2_4(i)={0}$, and $\epsilon^3_4(i)={-1}$.
\begin{figure}[htbp]
\includegraphics[width=4in]{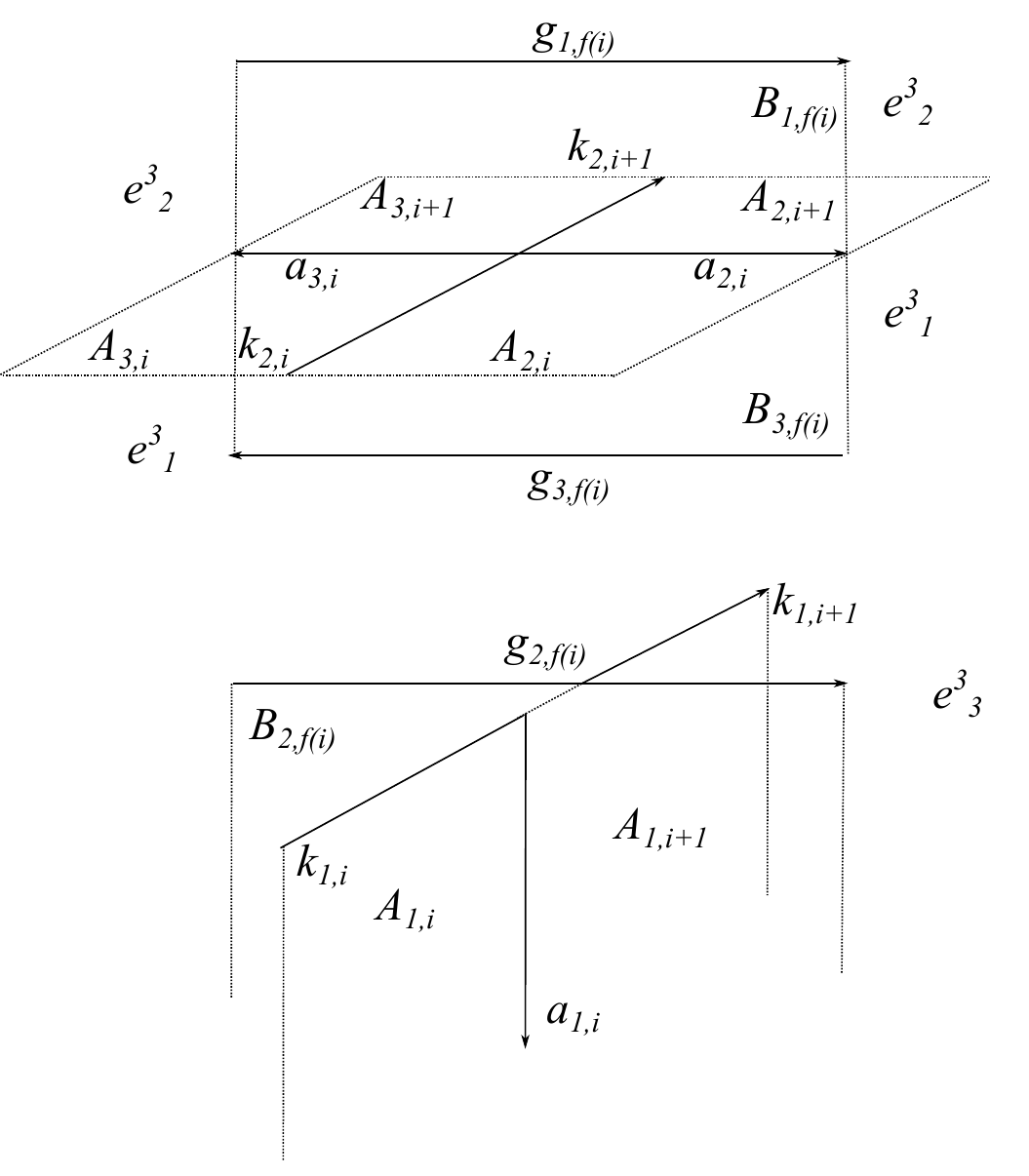}
\caption{One possible configuration of cells near the lift of a crossing where $\gamma$ passes over~$\alpha$. Here, the arc $k_i$ is colored $3$, which determines the subscripts on the three-cells in the picture.} 
\label{upstairsknotunderpseudo.fig}
\end{figure}

	\begin{figure}[htbp]\includegraphics[width=6in]{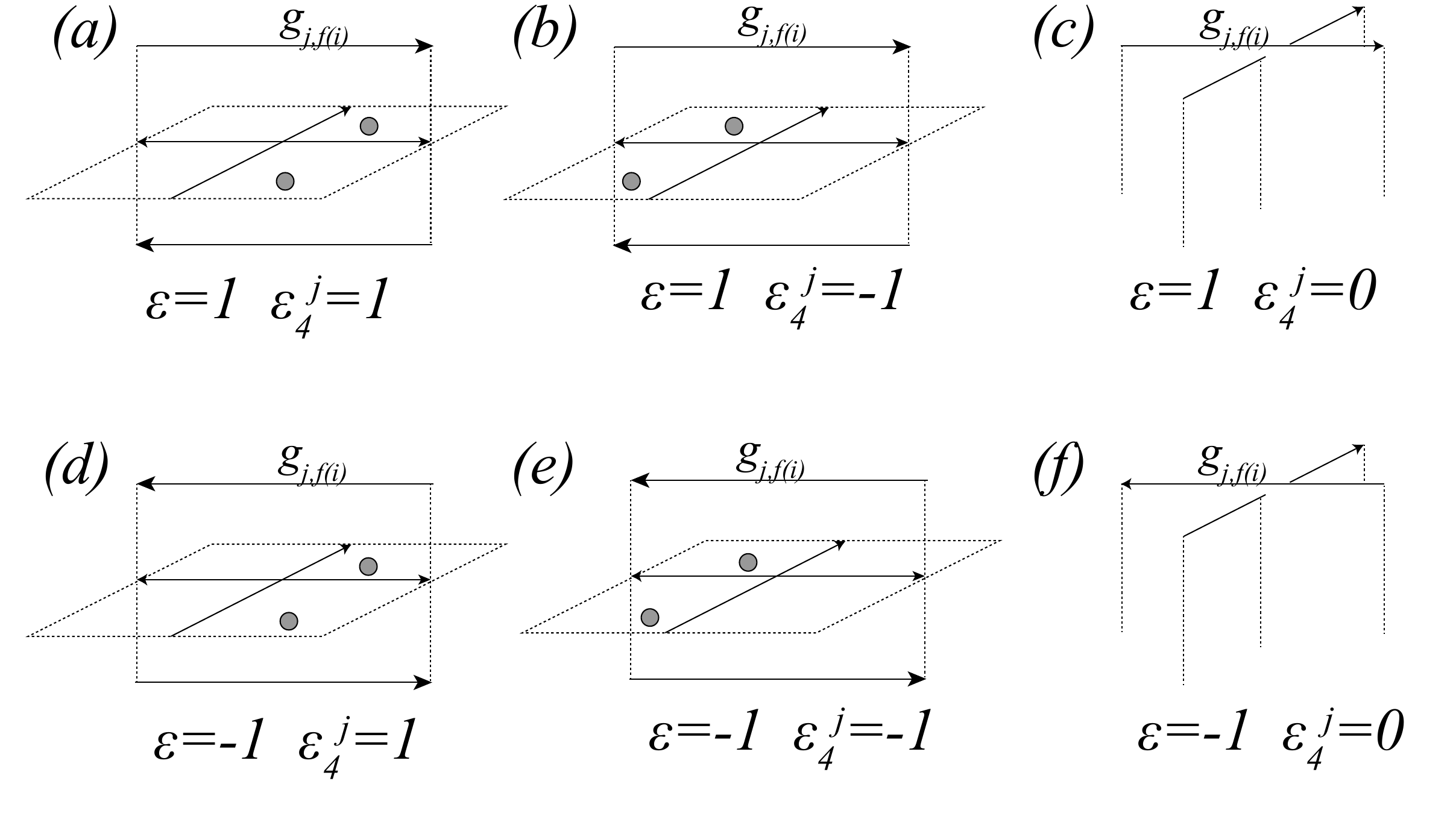}
		\caption{Configurations of cells above a crossing of $\alpha$ under $\gamma$.  Dotted 2-cells indicate the locations of the cells $A_{2,i}$ and $A_{2,i+1}$.}\label{alphaundergammaall.fig}
	\end{figure}

\section{Constructing 2-chains bounding pseudo-branch curves}\label{chains.sec}

\label{pb-surfaces} 
 Our task is to compute the linking numbers between any two lifts of pseudo-branch curves, whenever these linking numbers are well-defined.  In order to compute the linking numbers of pseudo-branch curves, we must find 2-chains bounding them, or determine that no such 2-chains exist.  

For now we assume that the lift of $\gamma$ has three connected components, $
\gamma_1$, $\gamma_2$, and $\gamma_3$.

We look for a 2-chain $C_j$ with  $\partial C_j=\gamma_j$ for fixed $j$.  A priori we have $$C_j=\sum_{i=0}^{s-1} z^j_iB_{j,i}+\sum_{i=0}^{m-1} x^j_i A_{2,i}+y^j_iA_{3,i}.$$

Since $\gamma_j=\sum_{i=0}^{s-1} g_{j,i},$ each 1-cell $g_{j,i}$ must appear exactly once in the boundary of $C_j$; no other 1-cells appear.  Hence $z^j_i=1$ and $y^j_i=-x^j_i.$ Now

\begin{equation}
\label{C2j} C_j=\sum_{i=0}^{s-1} B_{j,i}+\sum_{i=0}^{m-1} x^j_i (A_{2,i}-A_{3,i}).
\end{equation}

It remains to find the coefficients $x^j_i$. To that end, we write down a system of linear equations in the $x^j_i$, one for each crossing.  We obtain three systems of equations, one for each  $C_j$ with $j\in \{1,2,3\}$, given in Proposition~\ref{system-eqns2}.

\begin{customprop}{\ref{system-eqns2}} {\it
 Let $s$ denote the number of crossings of $\gamma$ under $\alpha$ plus the number of self-crossings of $\gamma$, let $m$ denote the number of crossings of $\alpha$ under $\gamma$ plus the number $n$ of self-crossings of $\alpha$.  Let $f(i)$ denote the index of the overstrand $k_{f(i)}$ at crossing $i$, and let the signs $\epsilon$, and $\epsilon_k$ for $k=1,2,3,4$ be as in Table~\ref{notation.tab}.  If the following inhomogeneous system of linear equations 

 $$\left\{\begin{array}{ll}
 x_i^j-x_{i+1}^j+\epsilon_1(i)\epsilon_2(i)x_{f(i)}^j=0 &\text{ if crossing i of } \alpha \text{ is inhomogeneous}\\
 x_i^j-x_{i+1}^j+2\epsilon_3(i)x_{f(i)}^j=0&\text{ if crossing i of }\alpha \text{ is homogeneous}\\
 x^j_i-x^j_{i+1}=\epsilon(i)\epsilon^j_4(i)&\text{if strand i of }\alpha \text{ passes under }\gamma
 \end{array}
 \right.$$

 has a solution $(x_0^j,x_1^j,\dots,x_{m-1}^j)$ over $\mathbb{Q}$, then the lift $\gamma_j$ of $\gamma$ is rationally null-homologous and is bounded by the 2-chain
 $$C_j=\sum_{i=0}^{s-1} B_{j,i}+\sum_{i=0}^{m-1} x_i^j(A_{2,i}-A_{3,i}).$$  }

\end{customprop}
\begin{proof}

	Our goal is to find the coefficients $x^j_i$ in the 2-chain $C_j$ above.  We take advantage of the fact that the lifts of the 1-cell $a_i$ appear only above crossing $i$ of $\alpha$; this may be a crossing of $\alpha$ under $\alpha$, or a crossing of $\alpha$ under $\gamma$.  We then compute the contribution of lifts of $a_i$ to $\partial C_j$ at three types of crossings: inhomogeneous crossings of $\alpha$, homogeneous crossings of $\alpha$, and crossings of $\alpha$ under $\gamma$. Our system of linear equations is obtained by setting each of these contributions to zero.

	Consider the eight possible configurations of 2-cells above an inhomogeneous crossing of $\alpha$, shown in Figure~\ref{inhomogeneousall.fig}.  The ``vertical" 1-cells $a_{2,i}$ and $a_{3,i}$ appear in $\partial C_j$ in pairs with opposite sign.  We compute the number of times the 1-chain $a_{2,i}-a_{3,i}$ appears in $$\partial(\sum_{i=0}^{s-1} B_{j,i}+\sum_{i=0}^{m-1} x_i^j(A_{2,i}-A_{3,i}))$$ for each configuration, and set this equal to zero.
	
	 We get the following eight equations, corresponding to each of the eight configurations.  
		\begin{multicols}{3}\begin{enumerate}[label={\it (\alph*)}]
			\item $x^j_i-x^j_{i+1}-x^j_{f(i)}=0$
	\item $x^j_i-x^j_{i+1}+x^j_{f(i)}=0$
	\item $x^j_i-x^j_{i+1}+x^j_{f(i)}=0$
	\item $x^j_i-x^j_{i+1}-x^j_{f(i)}=0$
	\item $x^j_i-x^j_{i+1}+x^j_{f(i)}=0$
	\item $x^j_i-x^j_{i+1}-x^j_{f(i)}=0$
	\item $x^j_i-x^j_{i+1}-x^j_{f(i)}=0$
	\item $x^j_i-x^j_{i+1}+x^j_{f(i)}=0$
			\end{enumerate}
		
		\end{multicols}
	Following~\cite{perko1964thesis}, we may rewrite the eight equations above in terms of $\epsilon_1$ and $\epsilon_2$ to consolidate them into just one equation:

\begin{equation}
\label{inhomo} 
x_i^j-x_{i+1}^j+\epsilon_1(i)\epsilon_2(i)x_{f(i)}^j=0.
\end{equation}

	Similarly, for homogeneous self-crossings of $\alpha$ we have the following equations, corresponding to the four possible configurations in Figure~\ref{homogeneousall.fig}. 
		\begin{multicols}{2}\begin{enumerate}[label={\it (\alph*)}]
			\item $x^j_i-x^j_{i+1}-2x^j_{f(i)}=0$
			\item $x^j_i-x^j_{i+1}+2x^j_{f(i)}=0$
			\item $x^j_i-x^j_{i+1}+2x^j_{f(i)}=0$
			\item $x^j_i-x^j_{i+1}-2x^j_{f(i)}=0$
			\end{enumerate}
		
		\end{multicols}
We can again consolidate them into one equation, this time using $\epsilon_3$:

\begin{equation}
\label{homo} 
x_i^j-x_{i+1}^j+2\epsilon_3(i)x_{f(i)}^j=0.
\end{equation}

Now we consider crossings of $\alpha$ under $\gamma$, as in Figure \ref{upstairsknotunderpseudo.fig}.  There are six possibile configurations for the 2-cells above crossings of $\alpha$ under $\gamma$, shown in Figure~\ref{alphaundergammaall.fig}.   We again count the number of times the 1-chain $a_{2,i}-a_{3,i}$ appears in each and set this equal to zero.  The corresponding equations are shown below. 

		\begin{multicols}{3}\begin{enumerate}[label={\it (\alph*)}]
			
	\item $x^j_i-x^j_{i+1}=1$
	\item $x^j_i-x^j_{i+1}=-1$
	\item $x^j_i-x^j_{i+1}=0$
	\item $x^j_i-x^j_{i+1}=-1$
	\item $x^j_i-x^j_{i+1}=1$
	\item $x^j_i-x^j_{i+1}=0$
			\end{enumerate}
		
		\end{multicols}
		
 Rewriting in terms of $\epsilon$ and $\epsilon_4^j$ gives

\begin{equation}
\label{knot-under-pb} 
x^j_i-x^j_{i+1}=\epsilon(i)\epsilon^j_4(i).
\end{equation}

Unlike the previous two, this equation does depend on $j$; the right hand side will be $1$ for one lift, $-1$ for another, and $0$ for the third.

The boundary of $C_j$ is then, by construction, $\sum_{i=0}^{s-1} g_{j,i}= \gamma_j$. 
\end{proof}

\section{Computing linking numbers and proof of Theorem~\ref{main-thm}}
\label{epsilons}

To complete the computation, we introduce the second pseudo-branch curve $\delta$ into the diagram $\alpha\cup\gamma$ {\it without changing the subscripts} on the arcs $k_i$ of $\alpha$ or the arcs $g_i$ of $\gamma$.  We label the arcs of $\delta$ by $h_0,\dots, h_{t-1}$, where $t$ is the number of crossings of $\delta$ under $\alpha$ plus the number of crossings of $\delta$ under $\gamma$.  (Self-crossings of $\delta$ do not contribute anything to the linking number.  When numbering arcs of $\delta$ for the computer program, we will assign consecutive arcs of $\delta$ the same number if they are separated by an overcrossing by another arc of $\delta$, in order to slightly simplify the input.)  

We again use the notation $f_\delta(i)$, or just $f(i)$, to denote the subscript of the overstrand at the head of the arc $h_i$.  As was the case with $\gamma$, the preimage of the curve $\delta$ may have one, two or three connected components.  We begin with the case where the preimages of both  $\gamma$ and $\delta$  are three closed loops.  Let $\delta_1$, $\delta_2$, and $\delta_3$ denote the three lifts of $\delta$; as before, we choose the subscripts on the $\delta_k$ so that the lift of $h_0$ which is contained in the 3-cell $e^3_k$ is a subset of $\delta_k$.  Let $h_{k,i}$ denote the lift of $h_i$ which is a subset of $\delta_k$.  Let $l^h_k(i)$ denote the subscript $s$ of the 3-cell $e^3_s$ which contains the arc $h_{k,i}$. We now compute the linking number $I_{j,k}$ of $\gamma_j$  with $\delta_k$, which amounts to proving our main theorem.

\begin{customthm}{\ref{main-thm}}
{\it Let $f: M\to S^3$ be a three-fold irregular dihedral cover branched along a knot $\alpha$, and let $\gamma, \delta\subset S^3-\alpha$.  If the lifts $\gamma_j$ and $\delta_k$ are rationally null-homologous closed loops in $M$ for $j,k\in\{1,2,3\}$, then the linking number $I_{j,k}$ of  $\gamma_j$ with $\delta_k$ is the sum:

$$I_{j,k}=\sum_{i=0}^{t-1} c_i,$$
where $c_i$ is given by

$$c_i =\left\{\begin{array}{ll}
\epsilon_5^k(i) x^j_{f(i)}&\text{if }h_i \text{ terminates at an arc } k_{f(i)} \text{ of }\alpha;\\
\epsilon_\delta(i)\epsilon_6^{j,k}(i)&\text{if }h_i \text{ terminates at an arc of }\gamma;\\
0&\text{if }h_i\text{ terminates at an arc of }\delta.\\
\end{array}
\right.$$}
      \end{customthm}

\begin{proof}[Proof of Theorem~\ref{main-thm}]
Assume that we have found a solution $(x^j_0,\dots,x^j_{m-1})\in \mathbb{Q}^m$ to the set of equations in Proposition~\ref{system-eqns2}.  Then the 2-chain bounding $\gamma_j$ is 
$$\sum_{i=0}^{s-1} B_{j,i}+\sum_{i=0}^{m-1} x^j_i( A_{2,i}- A_{3,i}).$$

Crossings of $\delta$ under both $\alpha$ and $\gamma$ may contribute to the linking number.  Self-crossings of $\delta$ do not contribute to the linking number, which is why our numbering system ignores these crossings. One possible configuration of cells above a crossing of $\delta$ under {$\alpha$} is shown in Figure \ref{upstairspseudo2underknot.fig}. {A schematic showing all possible configurations is shown in Figure~\ref{deltaunderalphaall.fig}.}  The lift $h_{k,i}$ will intersect one of the cells $A_{1,f(i)}$, $A_{2,f(i)}$ or $A_{3,f(i)}$.  If it intersects $A_{1,f(i)}$, this crossing does not contribute to $I_{j,k}$ because $A_{1,f(i)}$ is never contained in the 2-chain bounding $\gamma_j$.  If it intersects $A_{2,f(i)}$, the crossing contributes $\epsilon_\delta(i)x^j_{f(i)}$ to $I_{j,k}$.  If it intersects $A_{3,f(i)}$, the crossing contributes $-\epsilon_\delta(i)x^j_{f(i)}$ to $I_{j,k}$.  

We now work out this contribution for each of the six configurations in Figure~\ref{deltaunderalphaall.fig}.

\begin{multicols}{3}
	\begin{enumerate}[label={\it (\alph*)}]	
	\item $+x^j_{f(i)}$
	\item $-x^j_{f(i)}$
	\item $+0$
	\item $-x^j_{f(i)}$
	\item $+x^j_{f(i)}$	
		\item $+0$
\end{enumerate}
\end{multicols}

We define $\epsilon_5^k$ as follows:

\begin{equation}\label{epsilon5}
	\epsilon_5^{k}(i)= \left\{\begin{array}{ll}
1\text{ if } l^h_k(i)=w({f(i)}) \text{, }\\
0 \text{ if }l^h_k(i)=c({f(i)})\text{, and }\\
-1 \text{ otherwise.}
\end{array}
\right.
\end{equation}

The contribution to $I_{j,k}$ of a crossing of $\delta$ under $\alpha$ is then {$\epsilon_5^k(i)x^j_{f(i)}$}.
\begin{figure}[htbp]
\includegraphics[width=4in]{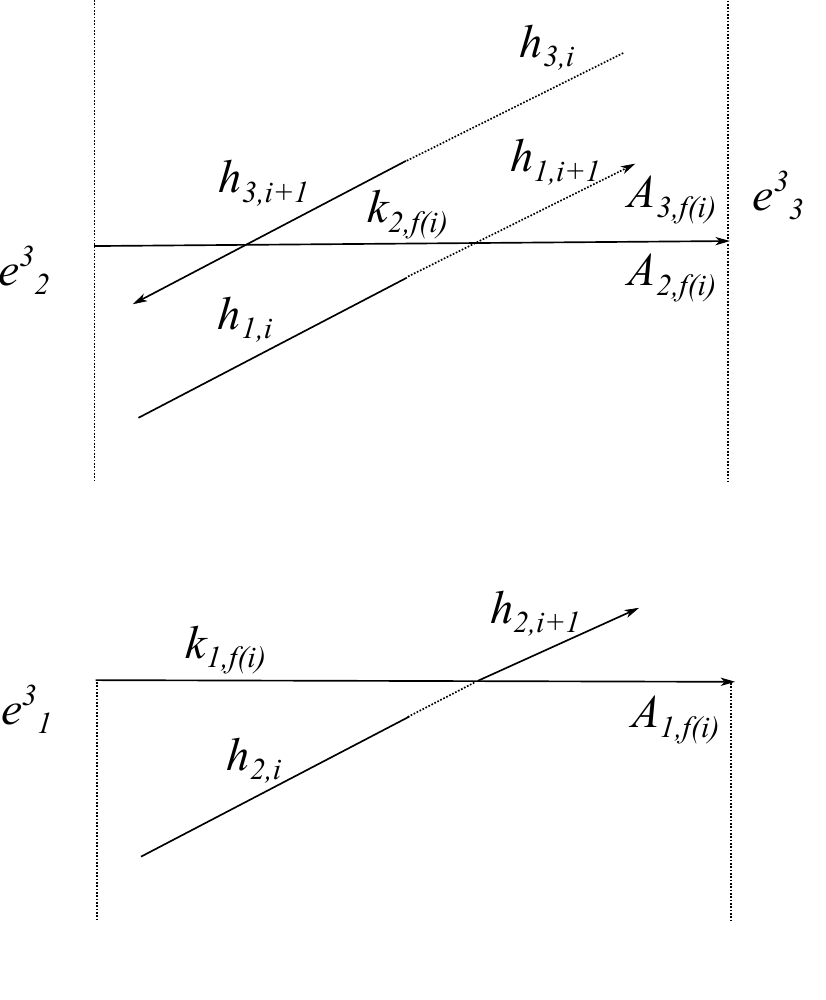}
\caption{One possible configuration of cells near the lift of a crossing where $\delta$ passes under $\alpha$.}
\label{upstairspseudo2underknot.fig}
\end{figure}

	\begin{figure}[htbp]\includegraphics[width=6.5in]{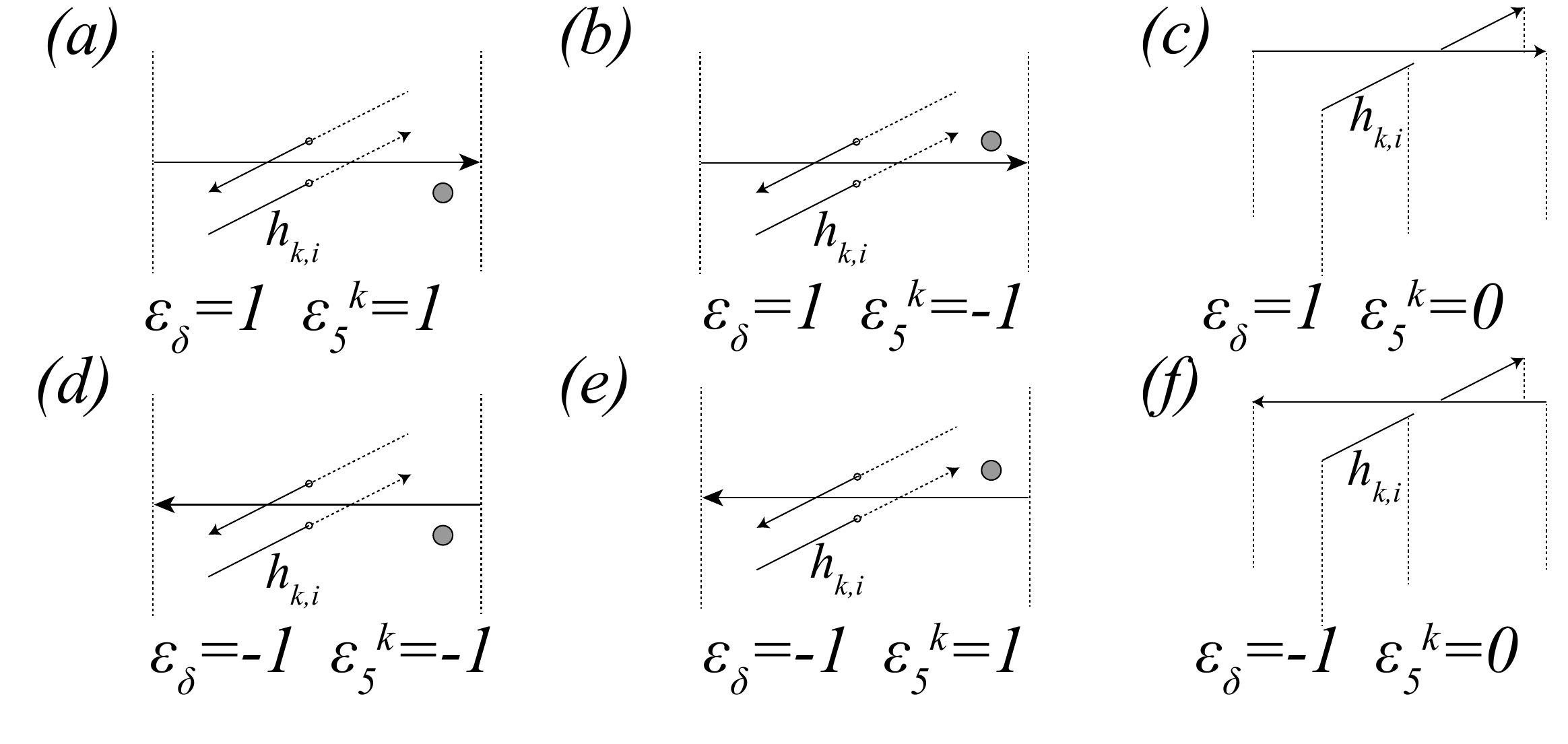}
		\caption{Configurations of cells above a crossing of $\delta$ under $\alpha$.  Dotted 2-cells indicate the locations of the cells $A_{2,i}$ and $A_{2,i+1}$.}\label{deltaunderalphaall.fig}
	\end{figure}
	
Now consider crossings of $\delta$ under $\gamma$.  The picture in the cover is similar to that of Figure~\ref{upstairspseudooverpseudo.fig}, except that the under-crossing arcs are $h_{\cdot,i}$'s rather than $g_{\cdot,i}$'s.  The cell $B_{j,f(i)}$ appears in the 2-chain bounding $\gamma_j$ exactly once, so the contribution of such a crossing to $I_{j,k}$ is $\epsilon_\delta(i)$ if the lifts of $h_{k,i}$ and $g_{j,f(i)}$ are in the same 3-cell, and $0$ otherwise.

Define $\epsilon_6$ as follows:
\begin{equation}\label{epsilon6}
	\epsilon_6^{j,k}(i)= 
\left\{\begin{array}{ll}
1 \text{ if } l^h_k(i)=l^g_j(f(i)), \text{ and }\\
0 \text { otherwise.}
\end{array}
\right.
\end{equation}

By construction,  crossings of $\delta$ under $\gamma$ contribute {$\epsilon_\delta(i)\epsilon^{j,k}_6(i)$} to $I_{j,k}$. The theorem follows.\end{proof}

\subsection{A note on pseudo-branch curves which lift to fewer than 3 loops} 
\label{fewer-lifts}
The pre-image of a pseudo-branch curve $\gamma$ under the covering map may well have fewer than three connected components. Precisely, the lifts of $\gamma$ could include two closed loops $\gamma_1\cdot\gamma_2$ and $\gamma_3$, or one closed loop $\gamma_1\cdot\gamma_2\cdot\gamma_3$, where each $\gamma_j$ covers $\gamma$ and $\cdot$ denotes concatenation of paths.

If some concatenation $\sigma$ of the $\gamma_i$'s forms a closed, rationally null-homologous loop, we can still find a 2-chain $C_{\sigma}$ with boundary $\sigma$ using the methods given in the previous Section~\ref{pb-surfaces} . We do this by writing down the three systems of equations for $j=1,2,3$ listed in Proposition~\ref{thm2}.  The 2-chain $C_{\sigma}$ bounding $\sigma$ is then:

$$C_{\sigma}=\sum_{j\in S}
\left(\sum_{i=0}^{s-1} B_{j,i}+\sum_{i=0}^{m-1} x_i^j(A_{2,i}-A_{3,i})\right).$$

Now let's consider the linking number between two such pseudo-branch curves. Suppose the closed loop $\sigma$ is a concatenation of paths $\gamma_i$, where $i\in S\subset \{1,2,3\}$, and the closed loop $\tau$ is a concatenation of paths $\delta_i$, where $i\in T\subset\{1,2,3\}$ and each $\delta_i$ is a lift of a second pseudo-branch curve $\delta\subset S^3-\alpha$. It follows from Section~\ref{fewer-lifts} that, in the notation of the same section, if $\sigma$ and $\tau$ are rationally null-homologous, their linking number is equal to $\sum_{j\in S, k\in T}I_{j,k}.$

\subsection{Linking numbers between branch curves}  \label{brlinking.sec}

Proposition~\ref{system-eqns2} and Theorem~\ref{main-thm} also allow us to compute linking number of the branch cuves $\alpha_1$ and $\alpha_2$, as follows.

Let the pseudo-branch curve $\gamma$ be a push-off of $\alpha$ along the vector field $\vec{A}$ from Section~\ref{liftcells.sec}. Since the diagram of $\alpha$ has an even number of crossings, $\gamma$ has three lifts.  Two are isotopic to the index-2 lift of $\alpha$ (these are push-offs of $\alpha_2$ along $\pm \vec{A}_2$), and one is isotopic to the index-1 lift of $\alpha$.  Now take a second push-off $\delta$ of $\alpha$ along $\vec{A}$, disjoint from $\gamma$.  Theorem~\ref{main-thm} applied to a diagram of the link $\alpha\cup\gamma\cup\delta$ gives the linking number of $\alpha_1$ and $\alpha_2$.

From this point of view, our results generalize the result of Perko \cite{perko1964thesis}, which gives an algorithm for computing the linking number of $\alpha_1$ and $\alpha_2$ using a cell structure determined by the cone on $\alpha$.  Recall the cell structure we introduce in Section~\ref{liftcells.sec} is a subdivision of Perko's cell structure.
\label{linking-k}
 \begin{prop}[Perko~\cite{perko1964thesis}]
 \label{system-eqs1}

	 If the following inhomogeneous system of linear equations

$$\left\{\begin{array}{ll}
x^1_i-x^1_{i+1}+\epsilon_1(i)\epsilon_2(i)x^1_{f(i)}=\epsilon(i)\epsilon_2(i) &\text{ if crossing i is an inhomogeneous crossing of }\alpha\\
x^1_i-x^1_{i+1}+2\epsilon_3(i)x^1_{f(i)}=0&\text{ if crossing i is a homogeneous crossing of }\alpha\\

\end{array}
\right.$$

has a solution $(x^1_0,x^1_1,\dots,x^1_{m-1})$ over $\mathbb{Q}$, then the index-1 branch curve $\alpha_1$ is rationally null-homologous and is bounded by the 2-chain
$$\sum_{i=0}^{m-1} A_{1,i}+x^1_i(A_{2,i}-A_{3,i}).$$

Similarly if the following system
$$\left\{\begin{array}{ll}
x^2_i-x^2_{i+1}+\epsilon_1(i)\epsilon_2(i)x^2_{f(i)}=\frac{\epsilon_2(i)}{2}\left(\epsilon_1(i)-\epsilon(i)\right)&\text{ if crossing i of } \alpha \text{ is inhomogeneous}\\
x_i^2-x^2_{i+1}+2\epsilon_3(i)x^2_{f(i)}=\epsilon_3(i)&\text{ if crossing i of }\alpha \text{ is homogeneous}\\

\end{array}
\right.$$
has a solution $(x^2_0,x^2_1,\dots,x^2_{m-1})$ over $\mathbb{Q}$ then the index-2 branch curve $\alpha_2$ is rationally null-homologous and is bounded by the 2-chain
$$\sum_{i=0}^{m-1} x^2_iA_{2,i}+(1-x^2_i)A_{3,i}.$$
\label{perko.prop}
\end{prop}
\subsection{Linking numbers between branch and pseudo-branch curves} \label{brpblinking.sec}

By again letting $\gamma$ be a push-off of $\alpha$, we can use Proposition~\ref{system-eqns2} and Theorem~\ref{main-thm} to compute the linking number between the lifts of another pseudo-branch curve $\delta$ with the two branch curves, where the branch curves are isotopic to lifts of $\gamma$.  However, this requires using a numbered diagram of the link $\alpha\cup\gamma\cup\delta$.

Alternatively, one can compute the linking numbers of the lifts $\delta_1$, $\delta_2$ and $\delta_3$ of a pseudo-branch curve $\delta$ with the branch curves $\alpha_1$ and $\alpha_2$ using only the diagram $\alpha\cup\delta.$ We use Proposition~\ref{system-eqs1} above, which gives 2-chains bounding $\alpha_1$ and $\alpha_2$ in terms of the cell structure derived from the cone on $\alpha$.

Arcs of the diagram of $\alpha$ are labelled $k_0,\dots,k_m$, where $m$ is now simply the number of self-crossings of $\alpha$; we continue to assume $m$ is even.  Adjacent arcs separated by the overarc $k_{f(i)}$ are labelled $k_i$ and $k_{i+1}$.  As before, we introduce the pseudo-branch curve $\delta$ to the diagram without changing the labelling on the arcs of $\alpha$. The arcs of $\delta$ are labelled $h_0,h_1, \dots h_t$, where $t$ denotes the number of crossings of $\delta$ under $\alpha$.  Adjacent arcs of $\delta$ separated by an overstrand of $\delta$ are given the same label $h_i$ and viewed as one arc, and adjacent arcs of $\delta$ separated by the overstrand $k_{f_\delta(i)}$ of $\alpha$ are labelled $h_i$ and $h_{i+1}$. Now, from this numbered diagram, we compute the linking numbers between branch and pseudo-branch curves by the formula given in Theorem~\ref{pb-b} below.

\begin{thm}\label{pb-b}
Suppose that the pseudo-branch curve $\delta$ lifts to three null-homologous closed loops $\delta_k$ for $k\in\{1,2,3\}$.  Let $\{x^1_i\}$ and $\{x^2_i\}$ be the solutions to the two systems of equations in Proposition~\ref{perko.prop}.  The linking number $I_{k}^1$ of $\delta_k$ with the index 1 branch curve $\alpha_1$ is

$$\sum_{i=0}^t c_i$$
where $c_i$ is given by $\epsilon_5^k(i)x^1_{f(i)}+\epsilon_\delta(i)(1-|\epsilon_5^k(i)|)$.

	The linking number $I_{k}^2$ of $\delta_k$ with the index 2 branch curve $\alpha_2$ is

$$\sum_{i=0}^t c_i$$
where $c_i$ is given by $\epsilon_5^k(i) x^2_{f(i)}+\frac{\epsilon_5^k(i)}{2}(\epsilon_\delta(i)\epsilon_5^k(i)-1)$.

\end{thm}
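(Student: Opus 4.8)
The plan is to compute each linking number as an intersection number, exactly as in the proof of Theorem~\ref{main-thm}, but now intersecting the $1$-cycle $\gamma^k=\sum_i h_{k,i}$ with a $2$-chain bounding a \emph{branch} curve rather than a second pseudo-branch curve. By Proposition~\ref{perko.prop} the branch curves $k^1$ and $k^2$ bound explicit $2$-chains $C^1$ and $C^2$ assembled from the wall lifts $A_{1,i},A_{2,i},A_{3,i}$, with the correction coefficients recorded by Perko's solutions $x^1_i$ and $x^2_i$. So I would fix $k$, set $I^1_k=\gamma^k\cdot C^1$ and $I^2_k=\gamma^k\cdot C^2$, and evaluate these intersection numbers arc by arc over the arcs $h_i$ of $\gamma$.

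First I would reduce each sum to crossings of $\gamma$ under $\alpha$. Under the labeling convention of Section~\ref{linking-k}, every arc $h_i$ terminates at such a crossing, and self-crossings of $\gamma$ have been absorbed into a single label; these contribute nothing, because $C^1$ and $C^2$ are supported on the $A$-cells while an arc running under another arc of $\gamma$ meets only $B$-cells. Thus $I^1_k$ and $I^2_k$ are sums over the $t+1$ under-$\alpha$ crossings, matching $\sum_{i=0}^t C_i$. At the crossing ending $h_i$, the lift $h_{k,i}$ passes through the lifted wall below the overstrand $k_{f(i)}$ and meets exactly one of $A_{1,f(i)},A_{2,f(i)},A_{3,f(i)}$; by the crossing rule of Section~\ref{new-crossings} this is $A_{1,f(i)}$ (on the fixed sheet) when $l^h_k(i)=c(i)$, it is $A_{2,f(i)}$ when $l^h_k(i)=w(i)$, and it is $A_{3,f(i)}$ otherwise.

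The computation then reduces to reading off, with the local sign $\epsilon(i)$, the coefficient with which the selected wall enters the relevant $2$-chain. For $k^2$ this is literally the calculation already carried out for Theorem~\ref{main-thm}, restricted to under-$\alpha$ crossings: since $A_{1,f(i)}$ does not appear in $C^2$, the three cases contribute $0$, $+\epsilon(i)x^2_{f(i)}$, and $-\epsilon(i)x^2_{f(i)}$, which is exactly $\epsilon(i)\epsilon_5^{k}(i)x^2_{f(i)}$. For $k^1$ the new feature is that the fixed-sheet wall $A_{1,f(i)}$ now occurs in $C^1$; I would verify from Proposition~\ref{perko.prop} that it enters with the same coefficient $+x^1_{f(i)}$ as $A_{2,f(i)}$, so that the three cases give $+\epsilon(i)x^1_{f(i)}$, $+\epsilon(i)x^1_{f(i)}$, and $-\epsilon(i)x^1_{f(i)}$. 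This is precisely $\epsilon(i)\epsilon_7^{k}(i)x^1_{f(i)}$, since $\epsilon_7^{k}$ differs from $\epsilon_5^{k}$ only by taking the value $+1$ rather than $0$ in the case $l^h_k(i)=c(i)$.

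The main obstacle is this last sign bookkeeping for the fixed-sheet wall: one must check against Perko's explicit index-$1$ $2$-chain in Appendix~A that $A_{1,f(i)}$ appears with coefficient $+x^1_{f(i)}$ and with the orientation that makes its contribution agree in sign with that of $A_{2,f(i)}$, thereby producing the $+1$ in $\epsilon_7^{k}$. Everything else transcribes directly from the proof of Theorem~\ref{main-thm}, with the existence of the $x^1_i$ and $x^2_i$ supplied by Proposition~\ref{perko.prop} and the orientation signs $\epsilon(i)$ handled as in the pseudo-branch case.
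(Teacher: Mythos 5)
Your route is the intended one: the paper in fact gives no proof of Theorem~\ref{pb-b} at all --- it is stated immediately after the sentence ``In the same manner, we compute the intersection number of a closed lift of $\gamma$ with each of the index 1 and 2 branch curves,'' so the intended argument is precisely your plan of repeating the proof of Theorem~\ref{main-thm} with Perko's $2$-chains from Proposition~\ref{perko.prop} in place of $C^{2,j}$. Your reduction to crossings of $\gamma$ under $\alpha$ (self-crossings of $\gamma$ meet only $B$-cells, which carry coefficient zero in these chains) and your identification of which of $A_{1,f(i)}$, $A_{2,f(i)}$, $A_{3,f(i)}$ the lifted arc punctures, according to whether $l^h_k(i)$ equals $c(f(i))$, equals $w(f(i))$, or neither, both match the paper's reasoning exactly.

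The gap is the step you yourself flagged as ``the main obstacle'' and then waved through. Proposition~\ref{perko.prop} does not, and cannot, give $A_{1,f(i)}$ the coefficient $+x^1_{f(i)}$: since each $k_{1,i}$ must appear exactly once in the boundary, the index-$1$ chain is forced to be $\sum_i A_{1,i}+x^1_i(A_{2,i}-A_{3,i})$, so every $A_{1,i}$ has coefficient $1$. A crossing with $l^h_k(i)=c(f(i))$ therefore contributes $\pm\epsilon(i)\cdot 1$, not $\epsilon(i)\,x^1_{f(i)}$, and no amount of checking orientations changes this, because the coefficient is dictated by the boundary condition rather than by a convention. The index-$2$ case has the same problem in a different spot: the chain of Proposition~\ref{perko.prop} is $\sum_i x^2_iA_{2,i}+(1-x^2_i)A_{3,i}$, so the ``otherwise'' case contributes $\epsilon(i)\bigl(1-x^2_{f(i)}\bigr)$ rather than $-\epsilon(i)\,x^2_{f(i)}$; your remark that this is ``literally the calculation already carried out for Theorem~\ref{main-thm}'' overlooks that there the $A_3$-coefficient was $-x^j_i$, whereas here it is $1-x^2_i$. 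As a result, your derivation arrives at the stated formulas only by way of these two misquotations of Proposition~\ref{perko.prop}; carried out with the actual coefficients, it yields answers differing from the theorem's expressions by $\sum\epsilon(i)$ taken over the crossings at which $\gamma^k$ punctures $A_{1,f(i)}$ (respectively $A_{3,f(i)}$). Closing the gap requires either showing that these correction sums vanish or reconciling the normalization of the bounding $2$-chains with the one implicitly behind the formulas of Theorem~\ref{pb-b}; that bookkeeping is the real content of the proof, and it is missing from your argument.
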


\begin{proof}
	
	The index 1 curve $\alpha_1$ is the boundary of the 2-chain
	$$\sum_{i=0}^{m-1} A_{1,i}+x^1_i(A_{2,i}-A_{3,i}).$$
	
	We compute the contribution to the linking number of $\delta_k$ with $\alpha_1$ for each crossing of $\delta$ under $\alpha$.  Recall that the possible configurations of cells above a crossing of $\delta$ under $\alpha$ are shown in Figure~\ref{deltaunderalphaall.fig}.  The contribution for each configuration is:
	
	\begin{multicols}{3}
		\begin{enumerate}[label={\it (\alph*)}]
			\item $+x^1_{f(i)}$
			\item $-x^1_{f(i)}$
			\item $+1$
			\item $-x^1_{f(i)}$
			\item $+x^1_{f(i)}$
			\item $-1$
			\end{enumerate}
	\end{multicols}
	
	We rewrite the contributions above in terms of $\epsilon_\delta$ and $\epsilon_5^k$ to get 
	$$c_i=\epsilon_5^k(i)x^1_{f(i)}+\epsilon_\delta(i)(1-|\epsilon_5^k(i)|).$$

	The index 2 curve $\alpha_2$ is the boundary of the 2-chain
	$$\sum_{i=0}^{m-1} x^ 2_i A_{2,i}+(1-x^2_i)A_{3,i}.$$
	The contribution for each configuration is:
	
		\begin{multicols}{3}
			\begin{enumerate}[label={\it (\alph*)}]
				\item $+x^2_{f(i)}$
				\item $+(1-x^2_{f(i)})$
				\item $0$
				\item $-x^2_{f(i)}$
				\item $-(1-x^2_{f(i)})$
				\item $0$
				\end{enumerate}
		\end{multicols}
		
		We rewrite the contributions above in terms of $\epsilon_\delta$ and $\epsilon_5^k$ to get 
		$$c_i=\epsilon_5^k(i) x^2_{f(i)}+\frac{\epsilon_5^k(i)}{2}(\epsilon_\delta(i)\epsilon_5^k(i)-1).$$
\end{proof}
\section{Examples}
\label{ex}

To conclude, we illustrate the output of the algorithm on a collection of pseudo-branch curves. The branch curve $\alpha$ is a connected sum of two 3-colored trefoil knots. Since the trefoil is a 2-bridge knot, its irregular 3-fold dihedral cover is again $S^3$.   From there, one can check that the irregular 3-fold dihedral cover of $S^3$ branched along the connected sum $\alpha$ is $S^1\times S^2$.

Now we choose pseudo-branch curves on which to perform our computations.  We choose curves which appear in our primary applications---see Section~\ref{applications.subsec}.  We briefly explain the context here, though it is not necessary for understanding the linking number computation itself.

\subsection{Characteristic knots} Cappell and Shaneson proved in~\cite{CS1984linking} that the regular and irregular $p$-fold dihedral covers of $(S^3, \alpha)$ can be constructed from a $p$-fold cyclic cover of $S^3$ branched along an associated knot $\beta\subset S^3-\alpha$, which they called a {\it mod $p$ characteristic knot} for $\alpha$.  They also showed that mod $p$ characteristic knots for $\alpha$, up to equivalence, are in one-to-one correspondence with $p$-fold irregular dihedral covers of $\alpha$.  For a precise definition, let $V$ be a Seifert surface for $\alpha$ and $L_V$ the corresponding linking form. A knot $\beta\subset V^\circ$ is a {\it mod $p$ characteristic knot} for $\alpha$ if $[\beta]$ is primitive in $H_1(V; \mathbb{Z})$ and $(L_V+L_V^T)\beta \equiv 0 \mod p$.

The characteristic knots of $\alpha$ play an essential role in many of the potential applications of this work, including the computation of Casson-Gordon invariants \cite{litherland1980formula}, the Rokhlin $\mu$ invariant \cite{CS1984linking}, and the computation of the invariant $\Xi_p$ discussed earlier~\cite{kjuchukova2018dihedral, cahnkjuchukova2018computing}.  Specifically, these invariants are computed using linking numbers of lifts of curves in $V-\beta$, where $V$ is a Seifert surface for $\alpha$, and $\beta$ is a characteristic knot. For the purposes of this paper, the essential property of a mod~3 characteristic knot $\beta\subset V$ is that every simple closed curve in $V-\beta$ lifts to three closed curves in the dihedral cover of $\alpha$ corresponding to $\beta$. As a result, we have focused on computations with curves in $S^3-\alpha$ whose lifts to a three-fold dihedral cover of $(S^3, \alpha)$ have three connected components.

In the examples below, we let $V$ be the connected sum of two copies of the familiar Seifert surface for the minimal-crossing diagram of the trefoil in 2-bridge position, namely a surface consisting of two disks joined by three twisted bands. The characteristic knot $\beta$ is then the connected sum of two copies of a characteristic knot for the trefoil; it is shown in blue in Figures~\ref{intersectingcurve2.fig} and~\ref{intersectingcurve.fig}.

\subsection{Examples} We apply our algorithm to the following pseudo-branch curves: the characteristic knot $\beta$, defined above; an essential curve $\omega_1$ (see Figure~\ref{intersectingcurve2.fig}) in $V-\beta$, which has one null-homologous lift and two homologically nontrivial lifts; and a pseudo-branch curve $\omega_2$ (see Figure~\ref{intersectingcurve.fig}) which is a push-off of a curve in $V$ intersecting $\beta$ once transversely, and lifts to a single null-homologous closed curve.  
\begin{figure}[htbp]
	\includegraphics[width=6in]{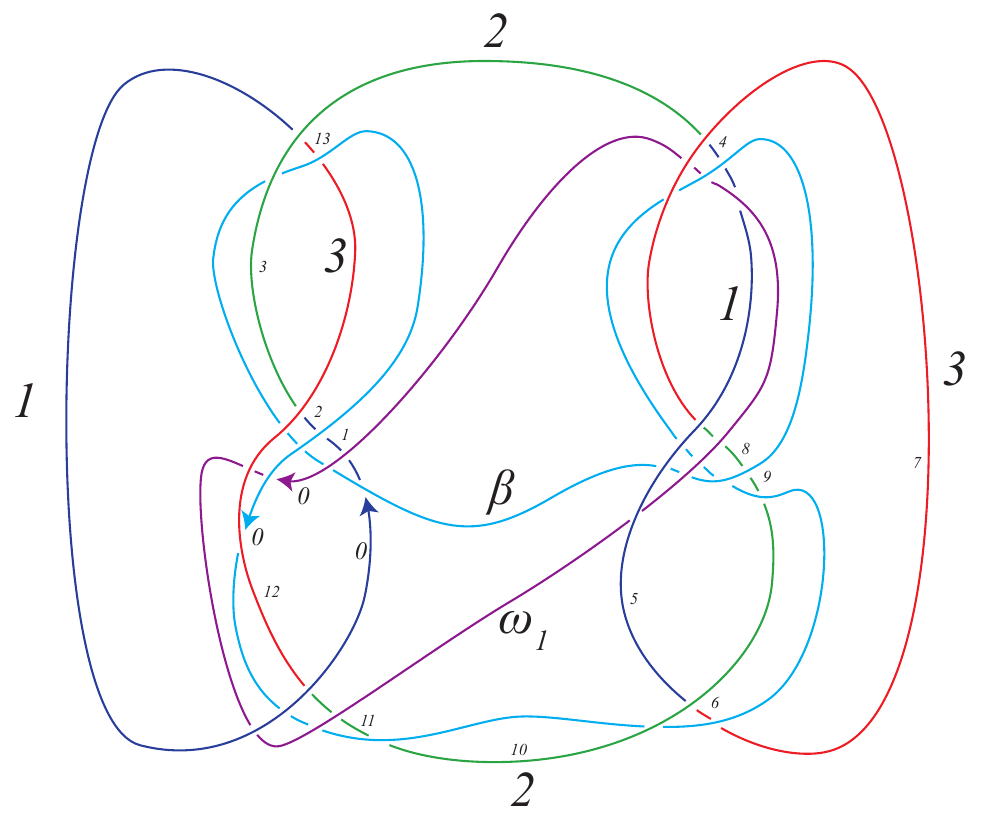}
	\caption{The connected sum, $\alpha$, of two trefoils. A characteristic knot, $\beta$, for $\alpha$. A curve, $\omega_1$, on a Seifert surface $V$ for $\alpha$, which is disjoint from $\beta$.  The numbering on $\alpha$ corresponds to the case where $\beta$ plays the role of the first pseudo-branch curve $\gamma$.}
	\label{intersectingcurve2.fig}
	\end{figure}
Our computer algorithm detects the number of lifts and whether each is rationally null-homologous, and allows us to compute the linking numbers of all pairs of rationally null-homologous lifts. The results of this computation are discussed below.  In each part, we choose one of the curves above to play the role of the first pseudo-branch curve, referred to as $\gamma$ throughout the previous sections (this is the curve for which we find bounding 2-chains), and then compute linking numbers by letting the other curves play the role of the second pseudo-branch curve $\delta$.

{\bf Part I.} To start, the role of the first pseudo-branch curve, denoted by $\gamma$ throughout the previous sections, is played by the characteristic knot $\beta.$

We include all the input needed for the computer program for our first computation, which finds intersection numbers of lifts of $\omega_1$ with 2-chains whose boundaries are lifts of $\beta$.  The input for other computations is similar.

First, we find the list of subscripts corresponding to the overarcs at the end of each arc of $\alpha$:

$$(f(0),f(1),\dots, f(13))=(7,0,12,7,6,10,3,5,6,3,2,0,0,3).$$

Next, we record the color of each arc of $\alpha$,

$$(c(0),c(1),\dots,c(13))=(1,1,1,2,1,1,3,3,2,2,2,2,3,3),$$

and the signs of crossings where arcs of $\alpha$ terminate:

$$(\epsilon(0),\epsilon(1),\dots,\epsilon(13))=(-1,-1,1,1,-1,1,-1,1,1,1,1,1,1,1).$$

We also record whether each arc $k_i$ of $\alpha$ terminates at some other arc of the knot $\alpha$ (in which case we write $t(i)=k$), or at an arc of the first pseudo-branch curve (in which case we write $t(i)=p$); we refer to this as a list of {\it crossing types}:

$$(t(0),t(1),\dots,t(13))=(p,p,k,k,p,k,p,k,p,p,p,k,p,k).$$

Now we record information about the first pseudo-branch curve $\gamma=\beta.$  The subscripts on the overarcs at the end of each arc of $\beta$ are:

$$(f_\gamma(0),f_\gamma(1),\dots,f_\gamma(9))=(12,0,10,6,5,7,5,0,12,3).$$

The signs for $\beta$ are:

$$(\epsilon_\gamma(0),\epsilon_\gamma(1),\dots,\epsilon_\gamma(9))=(1,-1,1,-1,-1,1,-1,-1,1,-1).$$

The list of crossing types for $\beta$ are:

$$(t_\gamma(0),t_\gamma(1),\dots,t_\gamma(9))=(k,k,k,p,k,k,k,p,k,k).$$

The algorithm finds a 2-chain bounding each lift of $\beta$.  The 2-chain bounding the $j^{th}$ lift of $\beta$ can be described by a list of coefficients $x^j_i$ of 2-cells $A_{2,i}$, as defined in Section~\ref{chains.sec}.  The coefficients for the three lifts of $\beta$ are given in Table~\ref{betacoefs.tab}.
\begin{table}[htbp]
\begin{tabular}{|c|c|c|c|c|c|c|c|c|c|c|c|c|c|c|}
	\hline
	$i$&0&1&2&3&4&5&6&7&8&9&10&11&12&13\\
	\hline
	$x^1_i$&-1& -2& -2& -1& -1& 0& 1& 0& 0& 0& 1& 0& -1& 0\\
	\hline
	$x^2_i$ &1& 1 & 2 & 1& 1& 1& 0& 0& -1& 0& -1& 0& 1& 0\\
	\hline
	$x^3_i$ &0& 1 & 0& 0& 0& -1& -1& 0& 1& 0& 0& 0& 0& 0\\ 
	\hline
	
	\end{tabular}
	\caption{The coefficients $x^j_i$ of $A_{2,i}$ in the 2-chain bounding the $j^{th}$ lift of $\beta$.}
	\label{betacoefs.tab}
	\end{table}
	
To compute the intersection numbers, we need to supply the overarc numbers $f_\delta(i)$, signs of crossings $\epsilon_\delta(i)$, and crossing types $t_\delta(i)$ for the second pseudo-branch curve $\delta$.

First, we let $\delta=\omega_1$.  Its over-arc numbers are $(0,12,0,5,6,7)$.  Its signs are $(-1,1,-1,1,1,-1)$.  Its crossing types are $(p,k,k,k,p,k)$.  The matrix of intersection numbers $I_{j,k}$ of a 2-chain bounding the $j^{th}$ lift of $\beta$ with the $k^{th}$ lift of $\omega_1$ is

$$(I_{j,k})=\begin{pmatrix}
{\bf 0}& -1& 1\\
{\bf -1}& 1& 0\\
{\bf 1}& 0& -1
\end{pmatrix}.$$

However, we will see in Part II of this example that only the first lift of $\omega_1$ is null-homologous.  Thus, the first column of the matrix (in bold) gives the linking numbers of the null-homologous lift of $\omega_1$ with each lift of $\beta$. The intersection numbers in the second and third columns turn out not to be well-defined linking numbers.

Next we let $\omega_2$ play the role of the second pseudo-branch curve $\delta$. Accordingly, we input the over-arc numbers $(10,3,6,5)$, signs of crossings $(1,-1,-1,-1)$, and crossing types $(k,p,p,k)$.  The matrix of intersection numbers $I_{j,k}$ of a 2-chain bounding the $j^{th}$ lift of $\beta$ with the $k^{th}$ (path) lift of $\omega_2$ is

$$(I_{j,k})=\begin{pmatrix}-1& -1& 0\\
 -1& 1& -2\\ 
0& -2& 0\end{pmatrix}.$$

In this case the 3 path-lifts of $\omega_2$ fit together to form one closed curve in $S^1\times S^2$.  The linking numbers of the single (closed) lift of $\omega_2$ with each of the 3 lifts of $\beta$ are obtained by summing the rows of the matrix.  Hence, all the linking numbers are $-2$. 

\begin{figure}[htbp]
	\includegraphics[width=6in]{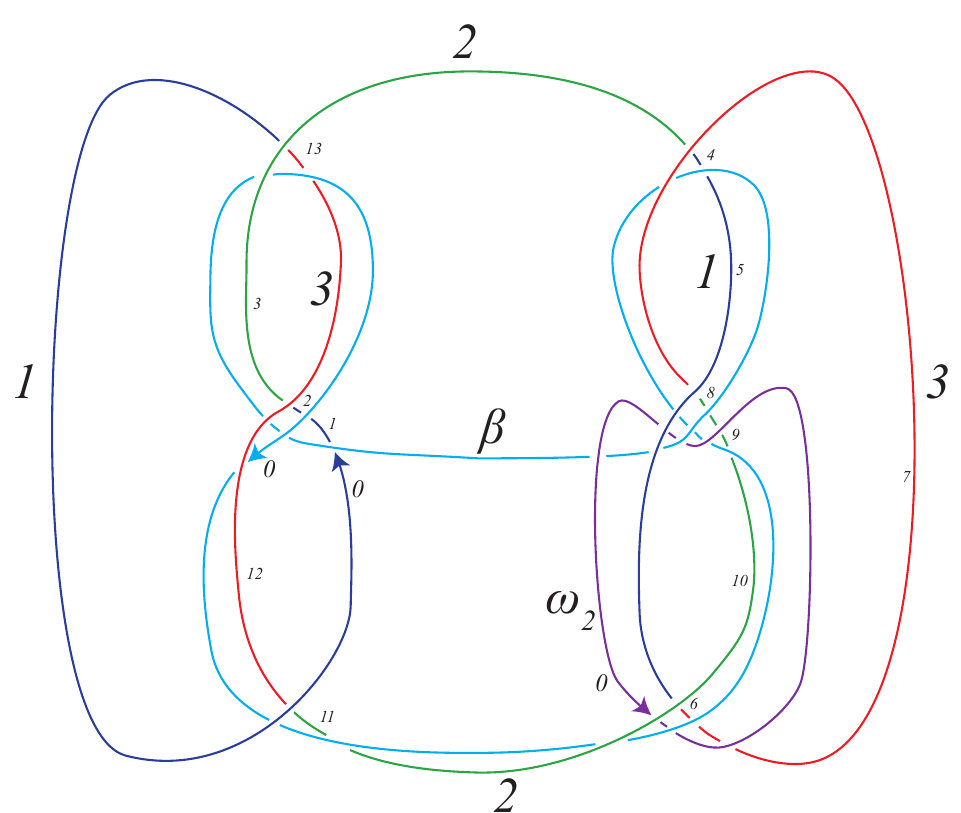}
	\caption{The connected sum, $\alpha$, of two trefoils. A characteristic knot, $\beta$, for $\alpha$. A push-off, $\omega_2$, of a curve on a Seifert surface $V$ for $\alpha$, which intersects $\beta$ once transversely.  The numbering on $\alpha$ corresponds to the case where $\beta$ plays the role of the first pseudo-branch curve $\gamma$.}
	\label{intersectingcurve.fig}
	\end{figure}

{\bf Part II.} To complete the example, we let the role of the first pseudo-branch curve be played by $\omega_1$.

		The list of coefficients $x^j_i$ of the 2-cells $A_{2,i}$ in the 2-chain bounding lift $j$ of $\omega_1$ is given in Table~\ref{betacompcoefs.tab}.  When $j=2,3$ these coefficients are not defined because the corresponding lifts of $\omega_1$ are not null-homologous, and the algorithm detects this, failing to produce a solution for the $x^j_i$.
	
	The matrix of intersection numbers of the 2-chain bounding the $j^{th}$ lift of $\omega_1$ with the $k^{th}$ lift of $\beta$ is
	
	$$(I_{j,k})=\begin{pmatrix}
	0& -1& 1\\
	. &. &. \\
	. & . & .
	\end{pmatrix}.$$
The empty positions in the matrix above indicate that the corresponding rational 2-chain does not exist; i.e., the given lift is not rationally null-homologous.  The first row of the matrix gives the linking numbers of the null-homologous lift of $\omega_1$ with each lift of $\beta$, and we see these numbers agree with the first column of the matrix of intersection numbers of 2-chains bounding lifts of $\beta$ with lifts of $\omega_1$, confirming our first computation.	
	\begin{table}[htbp]

	\begin{tabular}{|c|c|c|c|c|c|c|c|c|c|c|c|c|c|c|}
		\hline
		$i$&0&1&2&3&4&5&6&7&8&9\\
		\hline
		$x^1_i$&0& 0& 0& 0& -1& 0& 1& 1& 0& 0\\
		\hline
		$x^2_i$ &.& . & . & . &.& .& .& .& .& .\\
		\hline
		$x^3_i$ &.& . & . & . &.& .& .& .& .& .\\ 
		\hline

		\end{tabular}
		\caption{The coefficients $x^j_i$ of $A_{2,i}$ in the 2-chain bounding the $j^{th}$ lift of the curve $\omega_1$ in $V-\beta$.  Note that the $x^2_i$ and $x^3_i$ are undefined because the corresponding lifts are not rationally null-homologous. }
		\label{betacompcoefs.tab}
		\end{table}

The algorithm also allows us to compute the linking numbers of each of the null-homologous pseudo-branch curves above (the three lifts of $\beta$; the only null-homologous lift of $\omega_1$; the single closed lift of $\omega_2$) with each of the branch curves as well.  These linking numbers are all zero, as one can also deduce from a geometric argument, using the construction in Cappell-Shaneson \cite{CS1984linking} together with the fact that the curves $\beta$, $\omega_1$, and $\omega_2$ lie on a Seifert surface for $\alpha$.\\

Patricia Cahn\\
Smith College\\
{\it pcahn@smith.edu}

Alexandra Kjuchukova\\
Max Planck Institute for Mathematics -- Bonn\\
{\it sashka@mpim-bonn.mpg.de}
\bibliographystyle{amsplain}
\bibliography{BrCovBib}

\providecommand{\bysame}{\leavevmode\hbox to3em{\hrulefill}\thinspace}
\providecommand{\MR}{\relax\ifhmode\unskip\space\fi MR }
\providecommand{\MRhref}[2]{%
  \href{http://www.ams.org/mathscinet-getitem?mr=#1}{#2}
}
\providecommand{\href}[2]{#2}
\begin{thebibliography}{10}

\bibitem{bankwitz1934viergeflechte}
Carl Bankwitz and Hans~Georg Schumann, \emph{{\"U}ber viergeflechte},
  Abhandlungen aus dem Mathematischen Seminar der Universit{\"a}t Hamburg,
  vol.~10, Springer, 1934, pp.~263--284.

\bibitem{birman1985jones}
Joan~S Birman, \emph{On the jones polynomial of closed 3-braids}, Inventiones
  mathematicae \textbf{81} (1985), no.~2, 287--294.

\bibitem{birman1980seifert}
Joan~S Birman and Julian Eisner, \emph{Seifert and threlfall, a textbook of
  topology}, vol.~89, Academic Press, 1980.

\bibitem{burde1988links}
Gerhard Burde, \emph{Links covering knots with two bridges}, Kobe journal of
  mathematics \textbf{5} (1988), 209--220.

\bibitem{cahnkjuchukova2017singbranchedcovers}
Patricia Cahn and Alexandra Kjuchukova, \emph{Singular branched covers of
  four-manifolds}, arXiv preprint arXiv:1710.11562 (2017).

\bibitem{cahnkjuchukova2018computing}
\bysame, \emph{Computing ribbon obstructions for colored knots}, Fundamenta
  Mathematicae, To appear. arXiv:1812.09553 (2020).

\bibitem{cahnkju2018genus}
\bysame, \emph{The dihedral genus of a knot}, Algebraic \& Geometric Topology
  \textbf{20} (2020), no.~4, 1939--1963.

\bibitem{CS1975invariants}
Sylvain Cappell and Julius Shaneson, \emph{Invariants of 3-manifolds}, Bulletin
  of the American Mathematical Society \textbf{81} (1975), no.~3, 559--562.

\bibitem{CS1984linking}
\bysame, \emph{Linking numbers in branched covers}, Contemporary Mathematics
  \textbf{35} (1984), 165--179.

\bibitem{conway1970enumeration}
John~H Conway, \emph{An enumeration of knots and links, and some of their
  algebraic properties}, Computational problems in abstract algebra, Elsevier,
  1970, pp.~329--358.

\bibitem{dowker1982classification}
C.~H. Dowker and M.~B. Thistlethwaite, \emph{On the classification of knots},
  CR Math. Rep. Acad. Sci. Canada \textbf{4} (1982), no.~2, 129--131.

\bibitem{fox1970metacyclic}
Ralph Fox, \emph{Metacyclic invariants of knots and links}, Canad. J. Math
  \textbf{22} (1970), 193--201.

\bibitem{gauss1867allgemeine}
Carl~Friedrich Gauss, \emph{Allgemeine theorie des erdmagnetismus (1838)},
  Werke \textbf{5} (1867), 127--193.

\bibitem{geske2018signatures}
Christian Geske, Alexandra Kjuchukova, and Julius~L. Shaneson, \emph{Signatures
  of topological branched covers}, Int. Mat. Res. (2020),
  DOI:10.1093/imrn/rnaa184.

\bibitem{hartley1983identifying}
Richard Hartley, \emph{Identifying non-invertible knots}, Topology \textbf{22}
  (1983), no.~2, 137--145.

\bibitem{hilden1974every}
Hugh Hilden, \emph{Every closed orientable 3-manifold is a 3-fold branched
  covering space of ${S}^3$}, Bulletin of the American Mathematical Society
  \textbf{80} (1974), no.~6, 1243--1244.

\bibitem{hirsch1974offene}
Ulrich Hirsch, \emph{{\"U}ber offene abbildungen auf die 3-sph{\"a}re},
  Mathematische Zeitschrift \textbf{140} (1974), no.~3, 203--230.

\bibitem{kjuchukova2016classification}
Alexandra Kjuchukova, \emph{Dihedral branched covers of four-manifolds},
  Advances in Mathematics. (To appear.) arXiv preprint arXiv:1608.03329 (2016).

\bibitem{kjuchukova2018dihedral}
\bysame, \emph{Dihedral branched covers of four-manifolds}, Advances in
  Mathematics \textbf{332} (2018), 1--33.

\bibitem{litherland1980formula}
R~Litherland, \emph{A formula for the casson--gordon invariant of a knot},
  preprint (1980).

\bibitem{montesinos1978}
Jos{\'e}~Mar{\'\i}a Montesinos, \emph{4-manifolds, 3-fold covering spaces and
  ribbons}, Transactions of the American mathematical society \textbf{245}
  (1978), 453--467.

\bibitem{mulazzani1998representing}
Michele Mulazzani and Riccardo Piergallini, \emph{Representing links in
  3-manifolds by branched coverings of $s^3$}, manuscripta mathematica
  \textbf{97} (1998), no.~1, 1--14.

\bibitem{perko1964thesis}
Kenneth Perko, \emph{An invariant of certain knots}, Undergraduate Thesis
  (1964).

\bibitem{perko1974classification}
\bysame, \emph{On the classification of knots}, Proc. Am. Math. Soc \textbf{45}
  (1974), 262--266.

\bibitem{perko2016historical}
Kenneth~A Perko, \emph{Historical highlights of non-cyclic knot theory},
  Journal of Knot Theory and Its Ramifications \textbf{25} (2016), no.~03,
  1640010.

\bibitem{reidemeister1929knoten}
Kurt Reidemeister, \emph{Knoten und verkettungen}, Mathematische Zeitschrift
  \textbf{29} (1929), no.~1, 713--729.

\bibitem{riley1971homomorphisms}
Robert Riley, \emph{Homomorphisms of knot groups on finite groups}, mathematics
  of computation \textbf{25} (1971), no.~115, 603--619.

\end{thebibliography}

\newpage
\section{Appendix: The computer program}

 We used the following input to generate the results above:

\subsection{The characteristic knot $\beta$ is the first pseudo-branch curve.} 

The list of overstrand numbers $f(i)$ for $\alpha$ is $(7,0,12,7,6,10,3,5,6,3,2,0,0,3).$
 
The corresponding list of signs for the knot $\alpha$ is $(-1,-1,1,1,-1,1,-1,1,1,1,1,1,1,1).$
 
The list of crossing types is $(p,p,k,k,p,k,p,k,p,p,p,k,p,k).$
  
The list of colors is $(1,1,1,2,1,1,3,3,2,2,2,2,3,3).$
  
The list of overstrand numbers for the first pseudo-branch curve $\gamma=\beta$ is $(12,0,10,6,5,7,5,0,12,3).$

The corresponding list of signs is $(1,-1,1,-1,-1,1,-1,-1,1,-1).$
 
The list of crossing types is $(k,k,k,p,k,k,k,p,k,k).$

The program returns the matrix 
$$[[-1, -2, -2, -1, -1, 0, 1, 0, 0, 0, 1, 0, -1, 0], [1, 1, 2, 1, 1, 1, 0, 0, -1, 0, -1, 0, 1, 0],$$ $$[0, 1, 0, 0, 0, -1, -1, 0, 1, 0, 0, 0, 0, 0]],$$
 which is the list of coefficients $x^j_i$ of the 2-cells $A_{2,i}$ in the 2-chain bounding lift $i$ of $\beta$.  These coefficients are organized in Table~\ref{betacoefs.tab}.

{\bf $\beta$ is the first pseudo-branch curve and $\omega_1$ is the second pseudo-branch curve.}
The list of overstrand numbers for the second pseudo-branch curve $\omega_1$ is $(0,12,0,5,6,7)$.

The corresponding list of signs is $(-1,1,-1,1,1,-1)$.

The list of crossing types is $(p,k,k,k,p,k)$.

The output of the program is $[[0, -1, 1], [-1, 1, 0], [1, 0, -1]]$.  The interpretation of this matrix is given in Section~\ref{ex}.

{\bf $\beta$ is the first pseudo-branch curve and $\omega_2$ is the second pseudo-branch curve.}

The list of overstrand numbers for the second pseudo-branch cuve $\omega_2$ is $(10,3,6,5)$.

The corresponding list of signs is $(1,-1,-1,-1)$.

The list of crossing types is $(k,p,p,k)$.

The output of the program is $[[-1, -1, 0], [-1, 1, -2], [0, -2, 0]]$. The interpretation of this matrix is given in Section~\ref{ex}.

\subsection{The curve $\omega_1$ in $V-\beta$ is the first pseudo-branch curve.}  

The list of over-crossing numbers $f(i)$ for the subdivided knot diagram is $(0, 9, 5, 3, 7, 4, 3, 2, 0, 2).$

The list of crossing types is $(p, k, k, p, k, k, p, p, k, k).$

The list of colors is $(1, 1, 2, 1, 1, 3, 2, 2, 2, 3).$

The list of signs is $(-1, 1, 1, 1, 1, 1, -1, 1, 1, 1).$

The program returns the matrix 
$$[[0, 0, 0, 0, -1, 0, 1, 1, 0, 0], 'False', 'False'],$$

which is the list of coefficients $x^j_i$ of the 2-cells $A_{2,i}$ in the 2-chain bounding lift $i$ of $\beta$ is given in Table~\ref{betacompcoefs.tab}.  The $'False'$ entries signal that the second and third lifts of $\omega_1$ are not null-homologous.

{\bf $\omega_1$ is the first pseudo-branch curve and $\beta$ is the second pseudo-branch curve.}

The list of overstrand numbers for the second pseudo-branch curve $\beta$ is $(9,0,2,7,3,4,5,3,4,0,9,2)$.

The corresponding list of signs is $(1,-1,-1,1,1,-1,1,1,-1,-1,1,-1)$.

The list of crossing types is $(k,k,p,k,p,k,k,p,k,p,k,k)$.

The output of the program is $[[0, -1, 1], ['x', 'x', 'x'], ['x', 'x', 'x']]$.  The interpretation of this matrix is given in Section~\ref{ex}.

Finally, the Python code is given below.

\subsection{The Python program}  

\begin{verbatim}
# Given an initial 3-cell and the color of a knot arc, returns the number of
# the 3-cell on the other side of the vertical 2-cell below the knot.
def wallcolorchange(oldcolor,wallcolor):
    if oldcolor==wallcolor:
        newcolor=oldcolor
    elif oldcolor != wallcolor:
        s=set()
        s.add(1)
        s.add(2)
        s.add(3)
        s.discard(oldcolor)
        s.discard(wallcolor)
        newcolor=s.pop()
    return newcolor
\end{verbatim}

\begin{verbatim}
# A list of the 3-cells which contain each path-lift of a 
# pseudo-branch curve.
def pseudolifts(subknotcolors,povernums,povertypes):
    l=len(povernums)
    lift1=[1]
    lift2=[2]
    lift3=[3]
    for i in range(0,l-1):
        if povertypes[i]=='k':
            newcell1=wallcolorchange(lift1[i],subknotcolors[povernums[i]])
            lift1.append(newcell1)
            newcell2=wallcolorchange(lift2[i],subknotcolors[povernums[i]])
            lift2.append(newcell2)
            newcell3=wallcolorchange(lift3[i],subknotcolors[povernums[i]])
            lift3.append(newcell3)
        else:
            lift1.append(lift1[i])
            lift2.append(lift2[i])
            lift3.append(lift3[i])
    return lift1, lift2, lift3
\end{verbatim}

\begin{verbatim}
# Returns the vector of values of the function w(i)
def subwhereisA2(subknotcolors,subknottypes,subknotovernums):
    l=len(subknottypes)
    if subknotcolors[0]==1:
        where=[2]
    else:
        where=[1]
    for j in range(0,l-1):
        if subknottypes[j]=='k':
            where.append(wallcolorchange(where[j],
                subknotcolors[subknotovernums[j]]))
        elif subknottypes[j]=='p':
            where.append(where[j])
    return where
\end{verbatim}

\begin{verbatim}
# Returns the value of epsilon_1
def xingsign1(i,subknotcolors,subknottypes,subknotovernums):    
    if subwhereisA2(subknotcolors,subknottypes,subknotovernums)[subknotovernums[i]]
                                                                 !=subknotcolors[i]:
        s=1
    else:
        s=-1
    return s

# Returns the value of epsilon_2
def xingsign2(i,subknotcolors,subknottypes,subknotovernums):
    if subwhereisA2(subknotcolors,subknottypes,subknotovernums)[i]
                                                 !=subknotcolors[subknotovernums[i]]:
        s=-1
    else:
        s=1
    return s

# Returns the value of epsilon_3
def xingsign3(i,subknotcolors,subknottypes,subknotovernums):
    if subwhereisA2(subknotcolors,subknottypes,subknotovernums)[i]
                        ==subwhereisA2(subknotcolors,subknottypes,
					    subknotovernums)[subknotovernums[i]]:
        s=-1
    else:
        s=1
    return s
\end{verbatim}

\begin{verbatim}
# Given j, returns the matrix A|-b, where the solutions to Ax=b are the
# coefficients of the 2-cells A_2i in the chain bounding the j^th lift
# of the first pseudo-branch curve.
def p1surfacecoefmatrix(subknotcolors,subknottypes,subknotovernums,subknotsigns,
                        p1signs,p1overnums,lift):
    n=len(subknotcolors)
    coefmatrix= [[0 for x in range(n+1)] for x in range(n)]   
    for i in range(0,n):
            coefmatrix[i][i]+=1
            coefmatrix[i][(i+1)%n]-=1
            if subknottypes[i]=='k' and
			                      subknotcolors[i]!=subknotcolors[subknotovernums[i]]:
			
			                coefmatrix[i][subknotovernums[i]]+=
							                 xingsign1(i,subknotcolors,subknottypes,subknotovernums)
											                 *xingsign2(i,subknotcolors,subknottypes,subknotovernums)
				
            elif subknottypes[i]=='k' and
			                      subknotcolors[i]==subknotcolors[subknotovernums[i]]:
								  
                coefmatrix[i][subknotovernums[i]]+=
				                   xingsign3(i,subknotcolors,subknottypes,subknotovernums)*2
				
            elif subknottypes[i]=='p' and 
			                         lift[subknotovernums[i]]==subknotcolors[i]:
                coefmatrix[i][n]=0
				
            elif subknottypes[i]=='p' and 
			                         lift[subknotovernums[i]]==
									                  subwhereisA2(subknotcolors,subknottypes,subknotovernums)[i]:
			
                coefmatrix[i][n]=-subknotsigns[i]
				
            elif subknottypes[i]=='p' and lift[subknotovernums[i]]!=
			                subwhereisA2(subknotcolors,subknottypes,subknotovernums)[i]:
							
                coefmatrix[i][n]=subknotsigns[i]
    return coefmatrix
\end{verbatim}

\begin{verbatim}
# Returns the matrix A|-b, where a solution to Ax=b is the vector
# coefficients of the 2-cells A_2i in the chain bounding the index
# one branch curve
def br1surfacecoefmatrix(subknotcolors, subknottypes, subknotovernums, subknotsigns):
    n=len(subknotcolors)
    coefmatrix= [[0 for x in range(n+1)] for x in range(n)]
    
    for i in range(0,n):
        coefmatrix[i][i]+=1
        coefmatrix[i][(i+1)%n]-=1
        if subknottypes[i]=='k' and subknotcolors[i]!=subknotcolors[subknotovernums[i]]:
                coefmatrix[i][subknotovernums[i]]+=xingsign1(i,subknotcolors,
                                                     subknottypes,subknotovernums)*
                                                   xingsign2(i,subknotcolors,
                                                     subknottypes,subknotovernums)
                coefmatrix[i][n]=-subknotsigns[i]*xingsign2(i, subknotcolors,
                                                     subknottypes,subknotovernums)
        elif subknottypes[i]=='k' and subknotcolors[i]==subknotcolors[subknotovernums[i]]:
                coefmatrix[i][subknotovernums[i]]+= xingsign3(i,subknotcolors,
                                                   subknottypes,subknotovernums)*2
    return coefmatrix
\end{verbatim}
\begin{verbatim}
# Returns the matrix A|-b, where a solution to Ax=b is the vector of
# coefficients of the 2-cells A_2i in the chain bounding the index
# two branch curve
def br2surfacecoefmatrix(subknotcolors, subknottypes, subknotovernums, subknotsigns):
    n=len(subknotcolors)
    coefmatrix=[[0 for x in range(n+1)] for x in range(n)]
    
    for i in range(0,n):
        coefmatrix[i][i]+=1
        coefmatrix[i][(i+1)%n]-=1
        if subknottypes[i]=='k' and subknotcolors[i]!=
                                                subknotcolors[subknotovernums[i]]:
            coefmatrix[i][subknotovernums[i]]+=xingsign1(i,subknotcolors,
                                                 subknottypes,subknotovernums)*
                                               xingsign2(i,subknotcolors,
                                                 subknottypes,subknotovernums)
            coefmatrix[i][n]=xingsign2(i, subknotcolors,
                                                 subknottypes,subknotovernums)*.5*
                                               (subknotsigns[i]-xingsign1(i, subknotcolors,
                                                 subknottypes,subknotovernums))

        elif subknottypes[i]=='k' and subknotcolors[i]==
                                                  subknotcolors[subknotovernums[i]]:
            coefmatrix[i][subknotovernums[i]]+= xingsign3(i,subknotcolors,
                                                  subknottypes,subknotovernums)*2
            coefmatrix[i][n]-=xingsign3(i, subknotcolors,
                                                  subknottypes,subknotovernums)
    return coefmatrix
\end{verbatim}
\begin{verbatim}
# Finds a solution to the matrix equation Ax=b given the matrix A|-b
def solvefor2chain(matrixofcoefs, numcrossings):
    M=Matrix(matrixofcoefs)
    pivots=M.rref()[1]
    numpivots=len(pivots)
    RR=M.rref()[0]  
    x=[0 for j in range(numcrossings)]
    
    if numcrossings in pivots:
        return 'False'
    
    else:    
        for i in range(0,numpivots):
            x[pivots[i]]=-RR[i,numcrossings]  
        
        return x
\end{verbatim}

\begin{verbatim}
# Computes the intersection number of the 2nd pseudo-branch curve
# with the 2-chain bounded by the first pseudo-branch curve.
def p2intersectionwithp1surface(subknotcolors, subknotovernums, 
    subknottypes, subknotsigns, p2overnums,p2overtypes, p2signs,
    p1overnums,p1signs,p1overtypes):

    total=[[0,0,0],[0,0,0],[0,0,0]]
    l=len(p2overnums)
    p2lifts=pseudolifts(subknotcolors, p2overnums, p2overtypes)
    p1lifts=pseudolifts(subknotcolors,p1overnums,p1overtypes)
    coeflists=[]
    coeflists.append(solvefor2chain(p1surfacecoefmatrix(subknotcolors,subknottypes,
               subknotovernums,subknotsigns,p1signs,p1overnums,p1lifts[0]),
               len(subknotsigns)))
    coeflists.append(solvefor2chain(p1surfacecoefmatrix(subknotcolors,subknottypes,
               subknotovernums,subknotsigns,p1signs,p1overnums,p1lifts[1]),
               len(subknotsigns)))
    coeflists.append(solvefor2chain(p1surfacecoefmatrix(subknotcolors,subknottypes,
               subknotovernums,subknotsigns,p1signs,p1overnums,p1lifts[2]),
               len(subknotsigns)))
    where=subwhereisA2(subknotcolors,subknottypes,subknotovernums)
    for s in range(0,3): # Three lifts of 1st pseudo-branch curve
        if coeflists[s]!='False':
            for i in range(0,l):# Arcs of the second pseudo-branch curve
                if p2overtypes[i]=='k': 
                    for j in range (0,3):
                        if p2lifts[j][i]==subknotcolors[p2overnums[i]]:
                            total[s][j]+=0
                        elif p2lifts[j][i]==where[p2overnums[i]]:                        
                            total[s][j]+=coeflists[s][p2overnums[i]]
                       
                        elif p2lifts[j][i]!=where[p2overnums[i]]:                    
                            total[s][j]-=coeflists[s][p2overnums[i]]
                       
                if p2overtypes[i]=='p':
                    for j in range (0,3):
                        if p2lifts[j][i]!=p1lifts[s][p2overnums[i]]:
                            total[s][j]+=0
                       
                        elif p2lifts[j][i]==p1lifts[s][p2overnums[i]]:
                            total[s][j]+=p2signs[i]
                        
        else:
            total[s][0]='x'
            total[s][1]='x'
            total[s][2]='x'

return total
\end{verbatim}
\begin{verbatim}
#Compute the intersection number of the first pseudo-branch curve
# with the 2-chain bounded by the index 1 branch curve.
def p1intersectionwithbr1surface(subknotcolors, subknotovernums,
 subknottypes, subknotsigns, p1overnums, p1signs, p1overtypes):
    total=[0,0,0]
    l=len(p1overnums)
    p1lifts=pseudolifts(subknotcolors,p1overnums,p1overtypes)
    coeflist=solvefor2chain(br1surfacecoefmatrix(subknotcolors,
                      subknottypes,subknotovernums,subknotsigns),
                      len(subknotsigns))
    where=subwhereisA2(subknotcolors,subknottypes, subknotovernums)
    if coeflist!='False':
        for i in range(0,l): 
            if p1overtypes[i]=='k': 
                for j in range (0,3):
                    if p1lifts[j]==subknotcolors[p1overnums[i]]:
                        total[j]+=p1signs[i]
                    elif p1lifts[j]==where[p1overnums[i]]:
                        total[j]+=coeflist[p1overnums[i]]
                    elif p1lifts[j]!=where[p1overnums[i]]:
                        total[j]-=coeflist[p1overnums[i]]
            
    else:
        total=['x','x','x']
    return total
\end{verbatim}

\begin{verbatim}
#Compute the intersection number of the first pseudo-branch curve
# with the 2-chain bounded by the index 2 branch curve.
def p1intersectionwithbr2surface(subknotcolors, subknotovernums, subknottypes, 
                                 subknotsigns, p1overnums, p1signs, p1overtypes):
    total=[0,0,0]
    l=len(p1overnums)
    p1lifts=pseudolifts(subknotcolors,p1overnums,p1overtypes)
    coeflist=solvefor2chain(br2surfacecoefmatrix(subknotcolors,subknottypes,
             subknotovernums,subknotsigns),len(subknotsigns))
    where=subwhereisA2(subknotcolors,subknottypes, subknotovernums)
    if coeflist!='False':
        for i in range(0,l): 
            if p1overtypes[i]=='k': 
                for j in range (0,3):
                    if p1lifts[j]==subknotcolors[p1overnums[i]]:
                        total[j]+=0
                    elif p1lifts[j]==where[p1overnums[i]]:
                        total[j]+=coeflist[p1overnums[i]]
                        if p1signs[i]==-1:
                             total[j]+=-1
                    elif p1lifts[j]!=where[p1overnums[i]]:
                        total[j]+=1-coeflist[p1overnums[i]]
                        if p1signs[i]==1:
                             total[j]+=1
    else:
        total=['x','x','x']
    return total
\end{verbatim}

\end{document}